\documentclass[11pt]{amsart}
\usepackage{graphicx}
\usepackage{amssymb}
\usepackage{amscd}
\usepackage{cite}
\usepackage[all]{xy}
\usepackage{url}
\newtheorem{tw}{Theorem}[section]
\newtheorem{prop}[tw]{Proposition}

\theoremstyle{remark}

\theoremstyle{definition}

\newcommand{\Cal}[1]{\mathcal{#1}}

\newcommand{\podz}{\subseteq}

\newcommand{\ro}{\varrho}

\newcommand{\kre}[1]{\overline{#1}}
\newcommand{\gen}[1]{\langle #1 \rangle}
\newcommand{\map}[3]{#1\colon #2\to #3}
\newcommand{\field}[1]{\mathbb{#1}}
\newcommand{\zz}{\field{Z}}

\newcommand{\st}{\;|\;}

\begin{document}

\numberwithin{equation}{section}
\title[The first homology group of the mapping\ldots]
{The first homology group of the mapping class group of a nonorientable surface with twisted coefficients}

\author{Micha\l\ Stukow}

\thanks{Supported by NCN grant 2012/05/B/ST1/02171.}
\address[]{
Institute of Mathematics, University of Gda\'nsk, Wita Stwosza 57, 80-952 Gda\'nsk, Poland }

\email{trojkat@mat.ug.edu.pl}


\keywords{Mapping class group, Homology of groups, Nonorientable surface} \subjclass[2000]{Primary 57N05;
Secondary 20F38, 57M99}


\begin{abstract}
We determine the first homology group of the mapping class group ${\Cal M}(N)$ of a nonorientable surface $N$ 
with coefficients in $H_1(N;\zz)$.
\end{abstract}

\maketitle%
 \section{Introduction}%
Let $N_{g,s}$ be a smooth, nonorientable, compact surface of genus $g$ with $s$ boundary components. If
$s$ is zero, then we omit it from the notation. If we do not want to emphasise the numbers $g,s$, we simply 
write $N$ for a surface $N_{g,s}$. Recall that $N_{g}$ is a connected sum of $g$ projective planes 
and $N_{g,s}$ is obtained from $N_g$ by removing $s$ open discs.

Let ${\textrm{Diff}}(N)$ be the group of all diffeomorphisms $\map{h}{N}{N}$ such that $h$ is the identity 
on each boundary component. By ${\Cal{M}}(N)$ we denote the quotient group of ${\textrm{Diff}}(N)$ by
the subgroup consisting of maps isotopic to the identity, where we assume that isotopies are 
the identity on each boundary component. ${\Cal{M}}(N)$ is called the \emph{mapping class group} of $N$. 

The mapping class group ${\Cal{M}}(S_{g,s})$ of an orientable surface is defined analogously, but we consider 
only orientation preserving maps. 
\subsection{Background}
Homological computations play a prominent role in the theory of mapping class groups. In the orientable case,
Mumford \cite{MumfordAbel} observed that $H_1({\Cal{M}}(S_g))$ is a quotient of $\zz_{10}$. Then Birman 
\cite{Bir1,Bir1Err} showed that if $g\geq 3$ then $H_1({\Cal{M}}(S_g))$ is a quotient of $\zz_2$, and 
Powell \cite{Powell} showed that in fact $H_1({\Cal{M}}(S_g))$ is trivial if $g\geq 3$. As for 
higher homology groups, Harer \cite{Harer-stab,Harer3} computed $H_i({\Cal{M}}(S_g))$ for $i=2,3$ 
and Madsen and Weiss \cite{MadWeiss} determined the integral cohomology ring of the stable mapping 
class group.

In the nonorientable case, Korkmaz \cite{Kork-non1,Kork-non} computed $H_1({\Cal{M}}(N_g))$ for a 
closed surface $N_g$ (possibly with marked points). This computation was later \cite{Stukow_SurBg} 
extended to the case of a surface with boundary. As for higher homology groups, Wahl 
\cite{Wahl_stab} identified the stable rational cohomology of ${\Cal{M}}(N)$.

As for twisted coefficients, Morita \cite{MoritaJacFou} proved that 
\[H_1({\Cal{M}}(S_g);H_1(S_g;\zz))\cong \zz_{2g-2},\quad \text{for $g\geq 2$.}\]
There are also similar computations for the hyperelliptic mapping class groups. Tanaka 
\cite{Tanaka} showed that $H_1({\Cal{M}}^h(S_g);H_1(S_g;\zz))\cong \zz_{2}$ for $g\geq 2$ and in the nonorientable 
case we showed in \cite{PresHiperNon} that
\[H_1({\Cal{M}}^h(N_g);H_1(N_g;\zz))\cong \zz_{2}\oplus \zz_2\oplus\zz_2,\quad \text{for $g\geq 3$.}\]
\subsection{Main results}
The purpose of this paper is to prove the following theorem.
\begin{tw}\label{MainThm}
If  $N_{g,s}$ is a nonorientable surface of genus $g\geq 3$ with $s\leq 1$ boundary components, then
\[H_1({\Cal{M}}(N_{g,s});H_1(N_{g,s};\zz))\cong \begin{cases}
                                                 \zz_2\oplus\zz_2\oplus\zz_2 &\text{if $g\in\{3,4,5,6\}$}\\
                                                 \zz_2\oplus\zz_2&\text{if $g\geq 7$.}
                                                \end{cases}
\] 
\end{tw}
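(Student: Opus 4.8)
The plan is to compute $H_1(\mathcal{M}(N_{g,s}); H_1(N_{g,s};\mathbb{Z}))$ by exploiting a finite presentation of the mapping class group together with the standard machinery relating group homology with coefficients to generators and relations. Concretely, I would start from a known presentation of $\mathcal{M}(N_{g,s})$ in terms of Dehn twists and crosscap slides (for which one can cite the author's earlier work extending Korkmaz's computation to surfaces with boundary), and fix an explicit basis $a_1,\dots,a_g$ (or a related generating set) of the $\mathbb{Z}[\mathcal{M}]$-module $M = H_1(N_{g,s};\mathbb{Z})$, recording the action of each generator of $\mathcal{M}$ on $M$ as an explicit matrix.

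The key homological tool is the standard five-term exact sequence, or more directly the presentation-based description of $H_1$ of a group with coefficients in a module. If $\mathcal{M} = \langle X \mid R\rangle$, then $H_1(\mathcal{M}; M)$ is computed from the \emph{crossed-module} style complex: one forms the free module on the generators $X$, maps it to $M$ via the Fox derivatives $\partial r/\partial x$ of the relators evaluated through the action on $M$, and takes the appropriate subquotient. I would therefore carry out the following steps in order. First, assemble the presentation and the action matrices. Second, compute $H_0(\mathcal{M}; M) = M_{\mathcal{M}}$, the coinvariants, since this coinvariant module feeds into the relevant exact sequences and is needed to pin down the image of the relator maps. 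Third, set up the Fox-derivative relation matrix and compute its cokernel/kernel contribution to obtain $H_1(\mathcal{M}; M)$ as an abelian group.

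The main obstacle I expect is twofold. The computational obstacle is that the presentation of $\mathcal{M}(N_{g,s})$ has many relators (braid relations, lantern-type relations, chain relations, and the relations specific to crosscap slides), and each contributes a column to a large Fox-derivative matrix over $\mathbb{Z}$ whose Smith normal form must be controlled uniformly in $g$. The conceptual obstacle, reflected in the case split of the theorem, is the genus dependence: the extra $\mathbb{Z}_2$ summand present for $g \in \{3,4,5,6\}$ but absent for $g \geq 7$ must come from a relation (most likely a lantern or a low-genus relation) that imposes an additional constraint only once the genus is large enough to embed the relevant configuration of curves. Isolating exactly which relator kills that summand, and verifying that it genuinely fails to do so for small $g$, is where the real work lies.

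To manage this, I would organize the argument so that the bulk of the relations are handled by a stabilization or inductive argument establishing the stable answer $\mathbb{Z}_2 \oplus \mathbb{Z}_2$ for $g \geq 7$, and then treat the low-genus cases $g \in \{3,4,5,6\}$ separately, checking by direct matrix computation that the constraint removing the third $\mathbb{Z}_2$ is not yet available. Throughout, I would use that $M_{\mathcal{M}}$ and the relevant torsion are $2$-primary, which is consistent with the fact that the answer consists only of copies of $\mathbb{Z}_2$, and I would double-check the final group against the known hyperelliptic computation $H_1(\mathcal{M}^h(N_g); H_1(N_g;\mathbb{Z})) \cong \mathbb{Z}_2 \oplus \mathbb{Z}_2 \oplus \mathbb{Z}_2$ from the cited work as a sanity check on the action matrices and the Fox-derivative setup.
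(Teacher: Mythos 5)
Your overall strategy---take a finite presentation of $\mathcal{M}(N_{g,s})$, record the action matrices on $H_1(N_{g,s};\mathbb{Z})$, and compute the twisted $H_1$ by rewriting each relator through the bar-complex (equivalently, Fox-derivative) machinery---is exactly the strategy of the paper. But as written the proposal has two genuine gaps. First, the essential input is mis-identified: the author's earlier work extending Korkmaz's computation to surfaces with boundary provides generating sets and $H_1$ with \emph{trivial} coefficients, not a presentation. A full finite presentation of $\mathcal{M}(N_{g,s})$ for all $g$ with $s\le 1$ only became available with Paris--Szepietowski (2013), used here in simplified form, with generators the twists $a_1,\dots,a_{g-1}$, $b_j$, the crosscap transpositions $u_1,\dots,u_{\min\{5,g-1\}}$, and (for $s=0$) the element $\varrho$, subject to relations (A1)--(A9), (B1)--(B2), (C1)--(C8) and, in the closed case, (B3), (D1)--(D2), (E), (F). Without this specific presentation the computation cannot be set up at all, and its particular shape dictates the entire structure of the argument.

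Second, everything that actually proves the theorem is deferred, and where you do commit to specifics they do not match what happens. The case split at $g=7$ is produced neither by a lantern relation nor by a stabilization argument: in the paper the third $\mathbb{Z}_2$ (the class $a_{1,3}=[a_1]\otimes\gamma_3$, which survives every other relation) is killed precisely by relation (A6), $(a_2a_3a_4a_5a_6b_1)^{12}=(a_1a_2a_3a_4a_5a_6b_1)^{9}$, a chain-type relation that needs genus at least $7$ to embed; identifying this, and verifying that no other relation kills $a_{1,3}$ when $g\le 6$, requires the full relation-by-relation computation (including an induction over the relations (A8) handling the generators $b_j$ in even genus). Moreover, for $s=0$ the coefficient module is not free, since $2(\gamma_1+\cdots+\gamma_g)=0$, so the group $\langle\overline{X}\rangle\cap\ker\overline{\partial}_1$ is only presented by generators and relations rather than freely generated, and the extra relations involving $\varrho$ must be processed separately; your plan addresses neither point, and your appeal to ``the relevant torsion is $2$-primary'' presupposes the answer rather than deriving it. The comparison with the hyperelliptic computation is a reasonable sanity check but carries no logical weight in the proof.
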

The starting point for this  computation is the presentation for the mapping class group 
${\Cal{M}}(N_{g,s})$, where $g+s\geq 3$ and $s\in\{0,1\}$, obtained recently by Paris and Szepietowski \cite{SzepParis} 
(Theorems \ref{ParSzep1} and \ref{ParSzep2}).
 \section{Preliminaries}
\subsection{Nonorientable surfaces}
Represent surfaces $N_{g,0}$ and $N_{g,1}$ as respectively a sphere or a disc with $g$ crosscaps and 
let $\alpha_1,\ldots,\alpha_{g-1}$, $\beta_1,\ldots,\beta_{\frac{g-2}{2}}$ be two-sided circles indicated in Figures \ref{r01} and \ref{r02}. 
\begin{figure}[h]
\begin{center}
\includegraphics[width=0.8\textwidth]{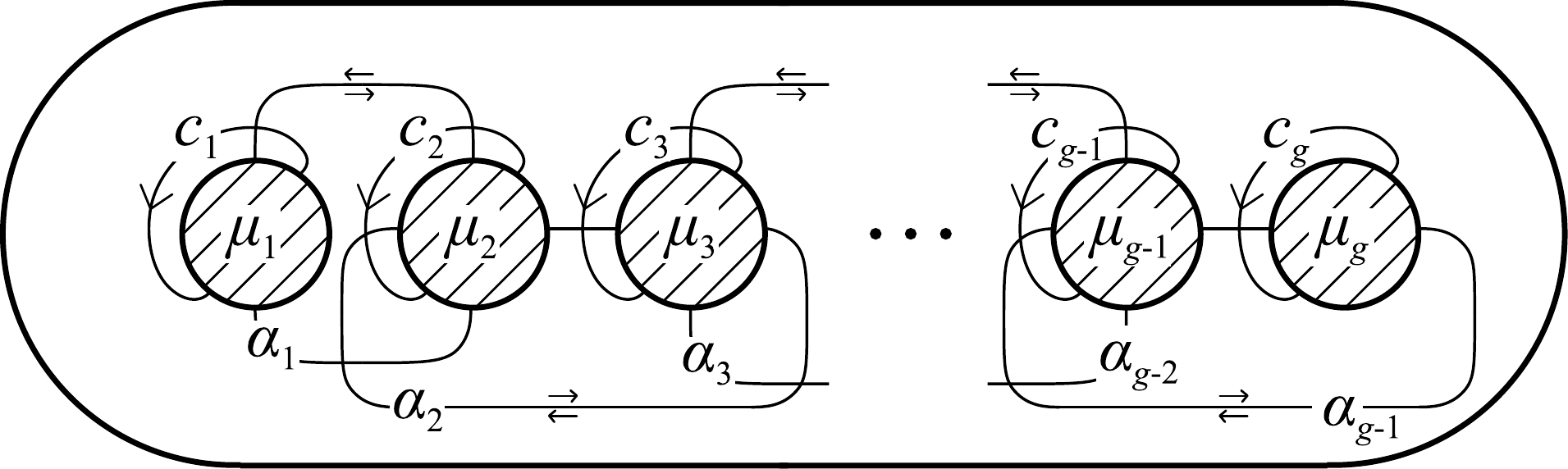}
\caption{Surface $N_{g}$ [$N_{g,1}$] as a sphere [disc] with crosscaps.}\label{r01} %
\end{center}
\end{figure}
\begin{figure}[h]
\begin{center}
\includegraphics[width=0.9\textwidth]{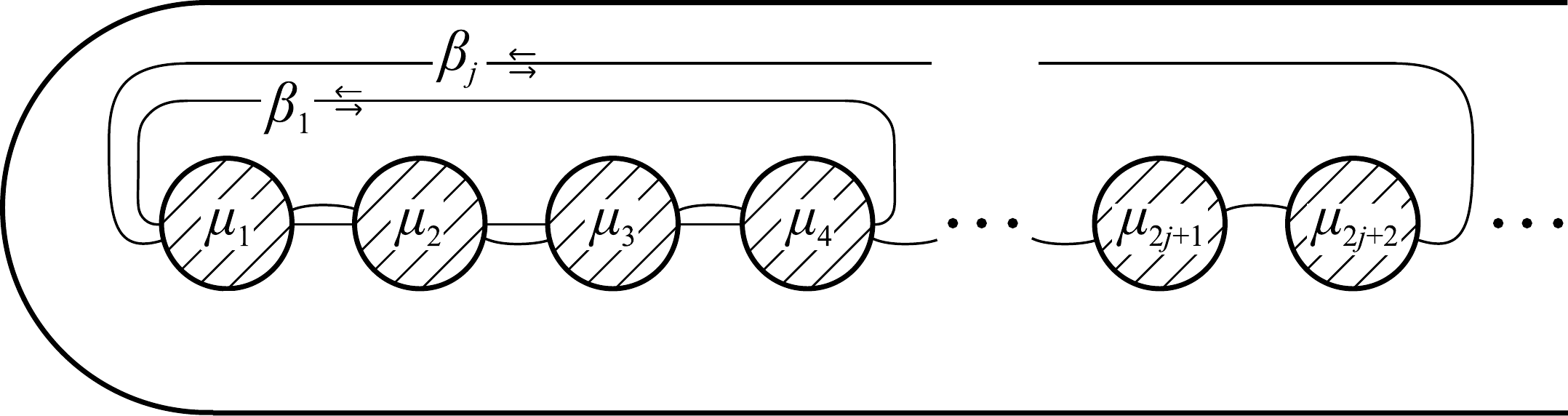}
\caption{Circles $\beta_1,\beta_2,\ldots,\beta_{\frac{g-2}{2}}$.}\label{r02} %
\end{center}
\end{figure}
Small arrows in these figures indicate directions of Dehn twists $a_1,\ldots,a_{g-1}$, $b_1,\ldots,b_{\frac{g-2}{2}}$ associated with these circles. 

For any two consecutive crosscaps $\mu_i,\mu_{i+1}$ we define a \emph{crosscap transposition} $u_i$ 
to be the map which interchanges these two crosscaps (see Figure \ref{r03}).
\begin{figure}[h]
\begin{center}
\includegraphics[width=0.65\textwidth]{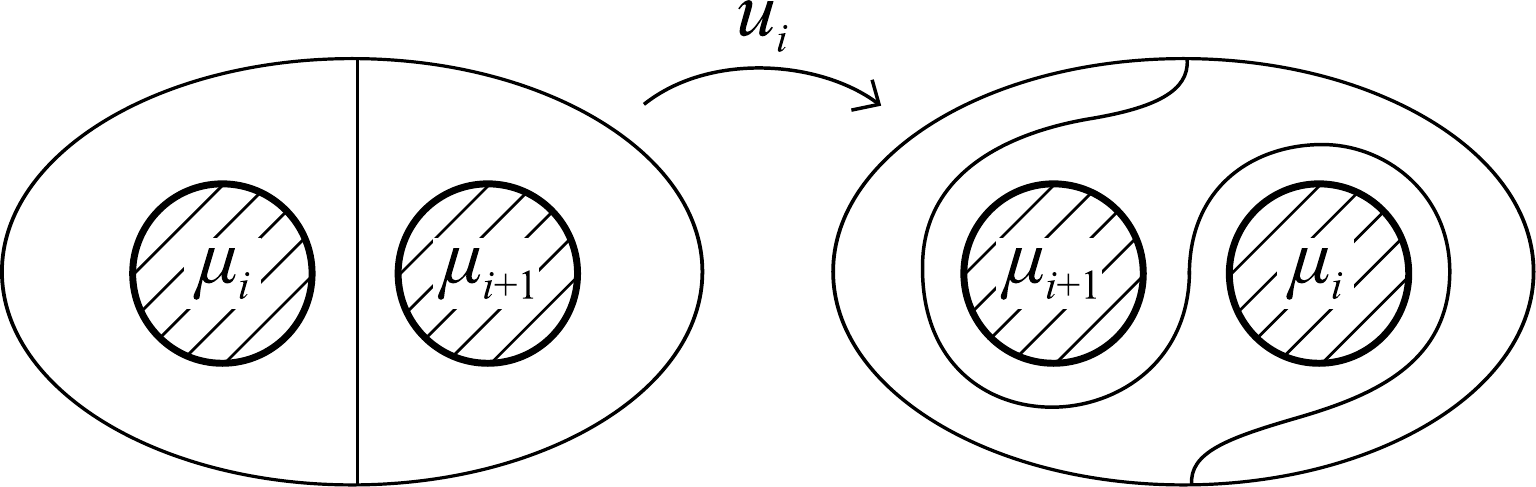}
\caption{Crosscap transposition.}\label{r03} %
\end{center}
\end{figure}
Moreover, if $N_g$ is closed, then we define
\[\ro=a_1a_2\cdots a_{g-1}u_{g-1}\cdots u_2u_{1}.\]
Element $\ro$ represents the \emph{hyperelliptic involution}, that is the reflection of $N_{g}$ 
across the
plane containing the centers of crosscaps in Figure \ref{r01} -- for details see 
\cite{PresHiperNon}.

Recently Paris and Szepietowski \cite{SzepParis} obtained a simple presentation for the mapping 
class group ${\Cal M}(N_{g,s})$ if $g+s>3$ and $s\in\{0,1\}$. The next two theorems state a sightly simplified version 
of Paris-Szpietowski presentation (see Proposition 3.3 and Corollaries 3.2, 3.4 of 
\cite{StukowSimpSzepPar}).
\begin{tw}\label{ParSzep1}
If $g\geq 3$ is odd or $g=4$, then ${\Cal{M}}(N_{g,1})$ admits a presentation with generators:
$a_1,\ldots,a_{g-1},u_1,\ldots,u_{\min\{5,g-1\}}$ and $b_1$ for $g\geq 4$. The defining relations are:
\begin{enumerate}
 \item[(A1)] $a_ja_k=a_ka_j$\quad for $g\geq 4$, $|j-k|>1$,
\item[(A2)] $a_ja_{j+1}a_j=a_{j+1}a_ja_{j+1}$\quad for $j=1,\ldots,g-2$,
 \item[(A3)] $a_jb_1=b_1a_j$\quad for $g\geq 4,j\neq 4$,
 \item[(A4)] $b_1a_4b_1=a_4b_1a_4$\quad for $g\geq 5$,
 \item[(A5)] $(a_2a_3a_4b_1)^{10}=(a_1a_2a_3a_4b_1)^6$\quad for $g\geq 5$,
 \item[(A6)] $(a_2a_3a_4a_5a_6b_1)^{12}=(a_1a_2a_3a_4a_5a_6b_1)^{9}$\quad for $g\geq 7$,
\item[(B1)] $u_1u_3=u_3u_1$\quad for $g\geq 4$,
\item[(B2)] $u_1u_{2}u_1=u_{2}u_1u_{2}$,
\item[(C1)] $u_1a_{j}=a_{j}u_1$\quad for $g\geq 4$, $j=3,\ldots,g-1$,
\item[(C2)] $a_1u_{2}u_1=u_{2}u_1a_{2}$,
\item[(C4)] $a_1u_1a_1=u_1$,
\item[(C5)] $u_{j+1}a_{j}a_{j+1}u_j=a_ja_{j+1}$\quad for $j=1,\ldots,\min\{5,g-1\}$,
\item[(C6)] $(u_3b_1)^2=(a_1a_2a_3)^2(u_1u_2u_3)^2$\quad for $g\geq 4$,
\item[(C7)] $u_5b_1=b_1u_5$\quad for $g\geq 6$,
\item[(C8)] $b_1a_4u_4=a_4u_4(a_4a_3a_2a_1u_1u_2u_3u_4)b_1$\quad for $g\geq 5$.
\end{enumerate}
If $g\geq 6$ is even, then ${\Cal{M}}(N_{g,1})$ admits a presentation with generators:
$a_1,\ldots,a_{g-1}$, $u_1,\ldots,u_{5}$ and additionally: $b_0,b_1,\ldots,b_{\frac{g-2}{2}}$. The defining relations 
are relations: (A1)--(A6), (B1)--(B2), (C1)--(C8) above and additionally:
\begin{enumerate}
 \item[(A7)] $b_0=a_1$,
\item[(A8)] $b_{j+1}(b_{j-1}a_{2j}a_{2j+1}a_{2j+2}a_{2j+3})^{6}=(b_{j-1}a_{2j}a_{2j+1}a_{2j+2}a_{2j+3}b_{j})^5$\quad\\	  for
$1\leq j\leq \frac{g-4}{2}$,
\item[(A9a)] $b_2b_1=b_1b_2$\quad for $g=6$,
\item[(A9b)] $b_{\frac{g-2}{2}}a_{g-5}=a_{g-5}b_{\frac{g-2}{2}}$\quad for $g\geq 8$. 
\end{enumerate}
\end{tw}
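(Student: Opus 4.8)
The plan is to derive this presentation from the presentation of $\mathcal{M}(N_{g,1})$ obtained by Paris and Szepietowski \cite{SzepParis} by a sequence of Tietze transformations, rather than to reconstruct it geometrically from scratch. Their presentation uses, among its generators, \emph{all} of the crosscap transpositions $u_1,\dots,u_{g-1}$, whereas the statement above keeps only $u_1,\dots,u_{\min\{5,g-1\}}$; the entire content of the simplification is therefore to eliminate the high-index transpositions $u_6,\dots,u_{g-1}$ and then to reorganise the surviving relations into the short list (A1)--(C8), supplemented by (A7)--(A9b) in the even case. The starting observation is that among the Paris--Szepietowski relations there is, for each admissible $j$, one of the form
\[
u_{j+1}a_ja_{j+1}u_j=a_ja_{j+1},
\]
which can be solved for the higher transposition as
\[
u_{j+1}=a_ja_{j+1}\,u_j^{-1}\,(a_ja_{j+1})^{-1}.
\]
Reading these for $j\ge 5$ and inducting upward, every $u_k$ with $k\ge 6$ becomes an explicit word in $u_5$ and the Dehn twists $a_1,\dots,a_{g-1}$.

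With these words in hand I would carry out the Tietze eliminations one generator at a time, in the order $u_{g-1},u_{g-2},\dots,u_6$: at each stage the generator is deleted, its defining relation is removed, and its word is substituted into every remaining relation. The decisive step is then a bookkeeping argument showing that, after substitution, each instance of a Paris--Szepietowski relation carrying an index $\ge 6$ is already a \emph{consequence} of (A1)--(C8), so that it can be discarded and the quoted finite list remains. Here one leans on the braid and commutation relations (A1), (A2), (B1), (B2) together with the mixed relations (C1)--(C5) to propagate the low-index relations to all higher indices; this stabilisation is exactly why capping the transpositions at $u_5$ loses no information once $g$ is large, and why for small $g$ (where $\min\{5,g-1\}<5$) even fewer transpositions are needed.

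The even case runs along the same lines: there the twists $b_0,b_1,\dots,b_{\frac{g-2}{2}}$ are all retained, relation (A7) simply records $b_0=a_1$, and the chain relations (A8) are kept verbatim rather than solved away, so the only eliminations are again of the high transpositions. I expect the main obstacle to be the survival of the ``global'' relations under substitution: the lantern-type relations (A5), (A6), (A8) and the crosscap relation (C6) encode identities supported on embedded subsurfaces, and checking that each either appears in the final list or follows from it demands patient, explicit manipulation of words in the free group rather than any single decisive trick. Once every high-index relation has been shown redundant and the retained relations have been confirmed to be consequences of the original ones, the chain of Tietze transformations is complete and the two presentations define the same group, namely $\mathcal{M}(N_{g,1})$.
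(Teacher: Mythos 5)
Your proposal is essentially the proof on record: the paper does not prove Theorem~\ref{ParSzep1} itself but quotes it from Proposition 3.3 and Corollaries 3.2, 3.4 of \cite{StukowSimpSzepPar}, where the presentation is obtained from the Paris--Szepietowski presentation \cite{SzepParis} exactly as you describe --- by Tietze transformations eliminating $u_6,\dots,u_{g-1}$ via $u_{j+1}=a_ja_{j+1}u_j^{-1}(a_ja_{j+1})^{-1}$ (your reading of (C5), valid for all $j$ in the original presentation) and then verifying that every substituted high-index relation is a consequence of the retained list. Your sketch leaves that redundancy verification as acknowledged bookkeeping, which is where the cited paper's actual labor lies, but the strategy coincides with the proof being referenced.
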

\begin{tw}\label{ParSzep2}
 If $g\geq 4$, then the group ${\Cal{M}}(N_{g,0})$ is isomorphic to the quotient of the group ${\Cal{M}}(N_{g,1})$ with the presentation given in Theorem \ref{ParSzep1} obtained by adding a generator $\ro$ and 
 relations:
\begin{enumerate}
\item[(B3)] $(a_1a_2\cdots a_{g-1})^g=
 \begin{cases}
1&\text{for $g$ even}\\
\ro&\text{for $g$ odd,}
\end{cases}$
\item[(D1)] $\ro a_j=a_j\ro$\quad for $j=1,\ldots,g-1$,
\item[(D2)] $u_1\ro u_1=\ro$,
\item[(E)] $\ro^2=1$,
 \item[(F)] $(u_1a_1a_2a_3\cdots a_{g-1}\ro)^{g-1}=1$.
\end{enumerate}
\end{tw}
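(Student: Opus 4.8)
The plan is to exhibit ${\Cal M}(N_{g,0})$ as an explicit quotient of ${\Cal M}(N_{g,1})$ and to read the extra relations off the kernel, using the standard lemma that if $G=\gen{X\mid R}$ and a normal subgroup $K$ is the normal closure of words $\lst{w}{n}$, then $G/K=\gen{X\mid R,\lst{w}{n}}$. Gluing a disc to the boundary of $N_{g,1}$ and extending each diffeomorphism by the identity on the disc gives a capping homomorphism $\map{\theta}{{\Cal M}(N_{g,1})}{{\Cal M}(N_{g,0})}$. This map is surjective, because every diffeomorphism of the closed surface $N_{g,0}$ can be isotoped to fix a chosen embedded disc pointwise. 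Hence the whole content is the identification of $\ker\theta$ together with a convenient set of normal generators written in the Paris--Szepietowski generators.

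To understand $\ker\theta$ I would factor $\theta$ through the one-marked-point group. Capping the boundary by a once-punctured disc yields a central extension
\[1\to\gen{t_\partial}\to{\Cal M}(N_{g,1})\to{\Cal M}(N_{g};*)\to 1,\]
where $t_\partial$ is the boundary Dehn twist, and the Birman exact sequence for forgetting the marked point (valid in the nonorientable setting) gives
\[1\to\pi_1(N_g)\to{\Cal M}(N_{g};*)\to{\Cal M}(N_{g,0})\to 1,\]
with the point-pushing subgroup as kernel. Thus $\ker\theta$ is an extension of $\pi_1(N_g)$ by $\gen{t_\partial}$, so it is normally generated in ${\Cal M}(N_{g,1})$ by $t_\partial$ together with the disc-pushing maps around a generating set $\lst{x}{g}$ of $\pi_1(N_g)$ (the cores of the $g$ crosscaps, subject to $x_1^2\cdots x_g^2=1$).

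It then remains to turn these normal generators into the stated relations, which forces the introduction of $\ro$ by a Tietze transformation: I would adjoin $\ro$ as the hyperelliptic involution $a_1\cdots a_{g-1}u_{g-1}\cdots u_1$, legitimate once an equivalent defining relation is supplied even though $u_6,\dots,u_{g-1}$ are not among the generators. The chain relation along $\alpha_1,\dots,\alpha_{g-1}$ evaluates $(a_1\cdots a_{g-1})^g$ as a boundary twist up to the hyperelliptic class, so that killing $t_\partial$ under $\theta$ is exactly relation (B3) and simultaneously fixes $\ro$. The disc-pushes, expanded through crosscap-slide formulas into words in $u_i,a_i,\ro$ and reduced modulo the relations of Theorem~\ref{ParSzep1}, collapse to the single surviving relation (F), coming from the defining relation $x_1^2\cdots x_g^2=1$ of $\pi_1(N_g)$. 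Finally (D1), (D2) and (E) record the elementary facts that the hyperelliptic involution commutes with each $a_j$, conjugates $u_1$ to its inverse, and is an involution. The quotient lemma then assembles these normal generators into both presentations in the statement.

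The step I expect to be the main obstacle is precisely this geometric rewriting: checking that the normal generators of $\ker\theta$ --- the boundary twist and the $g$ disc-pushes --- are on the one hand consequences of, and on the other hand together with $R$ generate, the relations (B3), (D1), (D2), (E), (F). This requires the exact form of the chain relation to pin down $(a_1\cdots a_{g-1})^g$, the crosscap-slide identities to convert each disc-push into a word in $u_i,a_i,\ro$, and careful bookkeeping of the difference between odd $g$ (where $\ro$ is a word in the $a_i$) and even $g$ (where $\ro$ is a genuinely new involution determined only by the combined relations). Once these identifications are secured, the quotient lemma delivers the two presentations of ${\Cal M}(N_{g,0})$.
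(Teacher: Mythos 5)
Your overall strategy---realize ${\Cal M}(N_{g,0})$ as the quotient of ${\Cal M}(N_{g,1})$ by the kernel of the capping homomorphism $\theta$, exhibit normal generators of $\ker\theta$, and convert them into the listed relations by Tietze moves---is the natural one, and it is essentially how this presentation is established in the sources. Note, however, that the paper under review does not prove Theorem~\ref{ParSzep2} at all: it is quoted from Paris--Szepietowski \cite{SzepParis}, in the simplified form of \cite{StukowSimpSzepPar}, so the only meaningful comparison is with the proofs given there.

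There is a genuine error in your identification of $\ker\theta$, and it is precisely the subtlety that distinguishes the nonorientable case. The sequence $1\to\gen{t_\partial}\to{\Cal M}(N_{g,1})\to{\Cal M}(N_{g};*)\to 1$ is not exact on the right: any class in the image of the capping map fixes an embedded disc pointwise, hence preserves the local orientation at $*$, and this property is invariant under isotopies fixing $*$. So the image is only the index-two subgroup ${\Cal M}^{+}(N_{g};*)$ of classes whose derivative at $*$ is orientation-preserving, whereas the point-pushing map $\pi_1(N_g)\to{\Cal M}(N_g;*)$ meets both cosets --- pushing the point around a one-sided loop reverses the local orientation at $*$. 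Consequently $\ker\theta$ does \emph{not} surject onto $\pi_1(N_g)$; it surjects only onto the subgroup $\pi_1^{+}(N_g)\cong\pi_1(S_{g-1})$ of two-sided loops, and your proposed normal generators --- disc-pushes around the crosscap cores $x_1,\dots,x_g$ --- simply do not exist in $\ker\theta$: a disc pushed around a one-sided loop returns to itself by a reflection, which cannot be corrected to the identity on the boundary by anything supported near the disc. The correct kernel is the fundamental group of the frame bundle of $N_g$ (equivalently, of the unit tangent bundle of the orientation double cover $S_{g-1}$), a \emph{central} extension of $\pi_1(S_{g-1})$ by $\mathbb{Z}\gen{t_\partial}$; note that an extension of the full group $\pi_1(N_g)$ by $\gen{t_\partial}$ of the kind you describe could not even be central, contradicting the centrality of $t_\partial$ in ${\Cal M}(N_{g,1})$. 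As a result, the step in which you derive relation (F) from the relation $x_1^2\cdots x_g^2=1$ of $\pi_1(N_g)$ rests on a false premise; the rewriting has to start instead from a generating set of $\pi_1^{+}(N_g)$ and the Euler-class relation of the central extension. The remainder of your outline (the role of (B3), (D1), (D2), (E), and the odd/even dichotomy for $\ro$) is plausible, but it cannot be carried out until this identification is repaired.
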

\subsection{Homology of groups}
Let us briefly review how to compute the first homology of a group with twisted coefficients -- for more details see Section~5 of \cite{PresHiperNon} and references there. 

For a given group $G$ and $G$-module $M$ (that is $\zz G$-module) we define $C_2(G),C_1(G)$ as the free $G$-modules generated
respectively by symbols $[h_1|h_2]$ and $[h_1]$, where $h_i\in G$. We define also $C_0(G)$ as the free $G$-module generated by the
empty bracket $[\cdot]$. Then the first homology group $H_1(G;M)$ is the first homology group of the complex
\[\xymatrix@C=3pc@R=3pc{C_2(G)\otimes_G M\ar[r]^{\ \partial_2\otimes_G {\rm id}}&C_1(G)\otimes_G M
\ar[r]^{\ \partial_1\otimes_G {\rm id}}&C_0(G)\otimes_G M},\]
where 
\[\begin{aligned}
\partial_2([h_1|h_2])&=h_1[h_2]-[h_1h_2]+[h_1],\\
   \partial_1([h])&=h[\cdot]-[\cdot].
  \end{aligned}
\]
For simplicity, we write $\otimes_G=\otimes$ and $\partial\otimes {\rm id}=\kre{\partial}$ henceforth.

If the group $G$ has a presentation $G=\langle X\,|\,R\rangle$ and
\[\gen{\kre{X}}=\gen{[x]\otimes m\st x\in X, m\in M}\podz C_1(G)\otimes M,\]
then $H_1(G;M)$ is a quotient of  $\gen{\kre{X}}\cap \ker\kre{\partial}_1$.

The kernel of this quotient corresponds to relations in $G$
(that is elements of $R$). To be more precise, if
$r\in R$ has the form $x_1\cdots x_k=y_1\cdots y_n$ and $m\in M$, then 
$r$ gives the relation (in $H_1(G;M)$)
\begin{equation}
 \kre{r}\otimes m\!:\ \sum_{i=1}^{k}x_1\cdots x_{i-1}[x_i]\otimes m=\sum_{i=1}^{n}y_1\cdots y_{i-1}[y_i]\otimes m.\label{eq_rew_rel}
\end{equation}
Then 
\[H_1(G;M)=\gen{\kre{X}}\cap \ker\kre{\partial}_1/\gen{\kre{R}},\]
where 
\[\kre{R}=\{\kre{r}\otimes m\st r\in R,m\in M\}.\]
 \section{Action of ${\Cal{M}}(N_{g,s})$ on $H_1(N_{g,s};\zz)$}
Let $c_1,\ldots,c_{g}$ be one-sided circles indicated in Figure \ref{r01}. Recall that if $s\in\{0,1\}$, then the $\zz$-module $H_1(N_{g,s};\zz)$ is generated by $\gamma_1=[c_1],\ldots,\gamma_g=[c_g]$. If $s=1$ then $\gamma_1,\ldots,\gamma_g$ are free generators of $H_1(N_{g,s};\zz)$, and if $s=0$ then they generate $H_1(N_{g,s};\zz)$ with respect to the single relation
\[2(\gamma_1+\gamma_2+\cdots+\gamma_g)=0.\]
The mapping class group ${\Cal{M}}(N_{g,s})$ acts on $H_1(N_{g,s};\zz)$, hence we have a representation
\[\map{\psi}{{\Cal{M}}(N_{g,s})}{\textrm{Aut}(H_1(N_{g,s};\zz))}. \]
It is straightforward to check that
\begin{equation}\begin{aligned}\label{eq:psi}
   \psi(a_j)&=I_{j-1}\oplus \begin{bmatrix}
0&1\\
-1&2\end{bmatrix}\oplus I_{g-j-1}\\
\psi(a_j^{-1})&=I_{j-1}\oplus \begin{bmatrix}
2&-1\\
1&0
    \end{bmatrix}\oplus I_{g-j-1}\\
    \psi(u_j)=\psi(u_j^{-1})&=I_{j-1}\oplus \begin{bmatrix}
0&1\\
1&0
    \end{bmatrix}\oplus I_{g-j-1}\\
    \psi(b_j)&=I_g+
    \begin{bmatrix}-1&1&-1&\ldots&1\\
     -1&1&-1&\ldots&1\\
     \vdots&\vdots&\vdots&\ddots&\vdots\\
     -1&1&-1&\ldots&1\\
    \end{bmatrix}_{2j+2}\oplus 0_{g-(2j+2)}\\
    \psi(b_j^{-1})&=I_g+
    \begin{bmatrix}1&-1&1&\ldots&-1\\
     1&-1&1&\ldots&-1\\
     \vdots&\vdots&\vdots&\ddots&\vdots\\
     1&-1&1&\ldots&-1\\
    \end{bmatrix}_{2j+2}\oplus 0_{g-(2j+2)}\\    
\psi(\ro)&=\psi(\ro^{-1})=-I_g
  \end{aligned}
\end{equation}
where $I_k$ is the identity matrix of rank $k$.
\section{Computing $\gen{\kre{X}}\cap \ker\kre{\partial}_1$}
If $h\in G$, then
\[\kre{\partial}_1([h]\otimes\gamma_i)=(h-1)[\cdot]\otimes\gamma_i=(\psi(h)^{-1}-I_g)\gamma_i,\]
where we identified $C_0(G)\otimes M$ with $M$ by the map $[\cdot]\otimes m\mapsto m$.

Let us denote $[a_j]\otimes \gamma_i, [u_j]\otimes \gamma_i, [\ro]\otimes\gamma_i$ respectively by $a_{j,i},u_{j,i},\ro_i$.
Using formulas \ref{eq:psi}, we obtain
\begin{equation}\begin{aligned}\label{eq:partial}
\kre{\partial}_1(a_{j,i})&=\begin{cases}
                           \gamma_j+\gamma_{j+1}&\text{if $i=j$}\\
                           -\gamma_j-\gamma_{j+1}&\text{if $i=j+1$}\\
                           0&\text{otherwise,}
                          \end{cases}
\\
\kre{\partial}_1(u_{j,i})&=\begin{cases}
                           -\gamma_j+\gamma_{j+1}&\text{if $i=j$}\\
                           \gamma_j-\gamma_{j+1}&\text{if $i=j+1$}\\
                           0&\text{otherwise,}
                          \end{cases}
\\
\kre{\partial}_1(b_{j,i})&=\begin{cases}
                           \gamma_1+\gamma_2+\cdots+\gamma_{2j+2}&\text{if $i=1,3,\ldots,2j+1$}\\
                           -\gamma_1-\gamma_2+\cdots-\gamma_{2j+2}&\text{if $i=2,4,\ldots,2j+2$}\\
                           0&\text{otherwise,}
                          \end{cases}
\\
\kre{\partial}_1(\ro_i)&=-2\gamma_i.
\end{aligned}\end{equation}
\begin{prop}
 Let $g\geq 4$ and $G={\Cal{M}}(N_{g,0})$. Then $\gen{\kre{X}}\cap \ker\kre{\partial}_1$ is the abelian group which admits the presentation with generators:
 \begin{enumerate}
 \item[(G1)] $a_{j,i}$ for $j=1,\ldots,g-1$ and $i=1,\ldots,j-1,j+2,\ldots,g$,
\item[(G2)] $a_{j,j}+a_{j,j+1}$ for $j=1,\ldots,g-1$,
\item[(G3)] $u_{j,i}$ for $j=1,2,\ldots,\min\{5,g-1\}$ and $i=1,\ldots,j-1,j+2,\ldots,g$,
\item[(G4)] $u_{j,j}+u_{j,j+1}$ for $j=1,2,\ldots,\min\{5,g-1\}$,
\item[(G5)] $a_{j,j}-a_{j+1,j+1}+u_{j,j}+u_{j+1,j+1}$ for $j=1,2,\ldots,\min\{5,g-1\}$,
\item[(G6)] $2a_{j,j}+\ro_j+\ro_{j+1}$ for $j=1,\ldots,g-1$,
\item[(G7)] $a_{1,1}+\ro_1=u_{1,1}$,
\item[(G8)] $b_{j,i}$ for $i=2j+3,\ldots,g$,
\item[(G9)] $b_{j,2i}+b_{j,1}$ for $i=1,\ldots,j+1$,
\item[(G10)] $b_{j,2i+1}-b_{j,1}$ for $i=1,\ldots,j$,
\item[(G11)] $b_{j,1}-a_{1,1}-a_{3,3}-\cdots-a_{2j+1,2j+1}$,
\item[(G12)] $\begin{cases}
 a_{1,1}+2a_{2,2}+2a_{4,4}+\cdots+2a_{g-1,g-1}-u_{1,1}&\text{if $g$ is odd}\\
2a_{1,1}+2a_{3,3}+\cdots+2a_{g-1,g-1}&\text{if $g$ is even,}
      \end{cases}$
\end{enumerate}
and relations:
\[\begin{aligned}
r_{a_j}\!&:0=2a_{j,1}+\cdots +2(a_{j,j}+a_{j,j+1})+\cdots+2a_{j,g}\quad\text{for $j=1,\ldots,g-1$}\\
r_{u_j}\!&:0=2u_{j,1}+\cdots +2(u_{j,j}+u_{j,j+1})+\cdots+2u_{j,g}\\ &\qquad\qquad\qquad\qquad\qquad \qquad\qquad\qquad\text{for $j=1,\ldots,\min\{5,g-1\}$}\\
r_{b_j}\!&:0=2(b_{j,2}+b_{j,1})+\cdots+2(b_{j,2j+2}+b_{j,1})\\
&\quad\quad+2(b_{j,3}-b_{j,1})+\cdots+2(b_{j,2j+1}-b_{j,1})+2b_{j,2j+3}+\cdots+2b_{j,g}\\
r_{\ro}\!&:\begin{cases}
             2(a_{1,1}+\ro_1-u_{1,1})+2(2a_{2,2}+\ro_2+\ro_3)+\cdots+2(2a_{g-1,g-1}+\ro_{g-2}+\ro_{g})\\
\qquad\qquad\qquad =2(a_{1,1}+2a_{2,2}+\cdots+2a_{g-1,g-1}-u_{1,1})\quad \text{if $g$ is odd}\\
              2(2a_{1,1}+\ro_1+\ro_2)+\cdots+2(2a_{g-1,g-1}+\ro_{g-1}+\ro_{g})\\
\qquad\qquad\qquad =2(2a_{1,1}+2a_{3,3}+\cdots+2a_{g-1,g-1})\quad \text{if $g$ is even.}
             \end{cases}
\end{aligned}
\]
\end{prop}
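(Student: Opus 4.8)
The plan is to realise $\gen{\kre X}$ as a direct sum of copies of $M=H_1(N_{g,0};\zz)$ and then to read off the kernel of $\kre\partial_1$ by an explicit change of basis. Since $C_1(G)$ is the free $\zz G$-module on the symbols $[h]$, each generator $x\in X$ spans a free rank-one summand $\zz G[x]$, and $\zz G[x]\otimes M\cong M$; hence $\gen{\kre X}\cong\bigoplus_{x\in X}M$. Consequently $\gen{\kre X}$ is presented by the symbols $[x]\otimes\gamma_i$ (written $a_{j,i},u_{j,i},b_{j,i},\ro_i$) subject only to the defining relation $2\sum_i\gamma_i=0$ of $M$ applied in each summand, i.e. to $2\sum_i[x]\otimes\gamma_i=0$, one relation per $x$. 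I would record this as a free abelian group $F$ on the symbols $[x]\otimes\gamma_i$ modulo the subgroup $R$ generated by the four families $2\sum_i a_{j,i}$, $2\sum_i u_{j,i}$, $2\sum_i b_{j,i}$, $2\sum_i\ro_i$.

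Next I would lift $\kre\partial_1$ to $\map{\widetilde\partial}{F}{\zz^g}$ defined verbatim by the formulas \eqref{eq:partial}, but now valued in the free module $\zz^g$; composing with $\zz^g\to M$ recovers $\kre\partial_1$. A direct check from \eqref{eq:partial} shows $R\podz L:=\widetilde\partial^{-1}(\gen{2\textstyle\sum_i\gamma_i})$, so that $\gen{\kre X}\cap\ker\kre\partial_1=L/R$. The central claim is that $L$ is free abelian with basis (G1)--(G12). That each listed element lies in $L$ is the routine verification, using \eqref{eq:partial}, that its $\widetilde\partial$-image vanishes, the only exception being (G12), whose image is exactly $2\sum_i\gamma_i$; this is precisely what forces the odd/even split in (G12). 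Linear independence then follows from a rank count against the triangular form of the change of basis.

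The main work, and the step I expect to be the obstacle, is showing that (G1)--(G12) \emph{span} $L$. Here I would first subtract a multiple of (G12) to reduce to $\ker\widetilde\partial$, and then analyse $\ker\widetilde\partial$ through the image lattice. The symbols mapping to $0$ (namely $a_{j,i},u_{j,i}$ with $i\neq j,j+1$ and $b_{j,i}$ with $i\ge 2j+3$) are free and account for (G1), (G3), (G8); the remaining ``active'' symbols map onto the vectors $\gamma_j+\gamma_{j+1}$, $\gamma_{j+1}-\gamma_j$, $\gamma_1+\cdots+\gamma_{2j+2}$ and $-2\gamma_i$, and the crux is that every $\zz$-linear relation among these is a combination of (G2), (G4), (G5), (G6), (G7), (G9), (G10), (G11). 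Each of these is exhibited as such a relation by a short computation, e.g. $(\gamma_j+\gamma_{j+1})-(\gamma_{j+1}+\gamma_{j+2})+(\gamma_{j+1}-\gamma_j)+(\gamma_{j+2}-\gamma_{j+1})=0$ gives (G5), and $(\gamma_1+\gamma_2)+(\gamma_3+\gamma_4)+\cdots+(\gamma_{2j+1}+\gamma_{2j+2})=\gamma_1+\cdots+\gamma_{2j+2}$ gives (G11); the remaining task is to verify by a rank count that they exhaust the relation lattice.

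Finally, once (G1)--(G12) is established as a basis of $L$, I would rewrite the four generators of $R$ in this basis. Here $2\sum_i a_{j,i}$ becomes $r_{a_j}$ and $2\sum_i u_{j,i}$ becomes $r_{u_j}$ at once, grouping the diagonal pair as (G2) resp. (G4); $2\sum_i b_{j,i}$ becomes $r_{b_j}$ by grouping the even- and odd-indexed terms as (G9) and (G10) and the high terms as (G8), the $b_{j,1}$-contributions cancelling on their own; and $2\sum_i\ro_i$ becomes $r_\ro$ after substituting (G6), (G7) and (G12), which is where the parity distinction reappears. Since $L/R=\gen{\kre X}\cap\ker\kre\partial_1$, this yields exactly the asserted presentation, and I expect the $\ro$-relation to be the most delicate piece of the final bookkeeping.
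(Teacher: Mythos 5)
Your framework is sound, and it is in essence a lattice-theoretic repackaging of the paper's argument: since $C_1(G)$ is $G$-free on the symbols $[h]$, you are right that $\gen{\kre{X}}\cong\bigoplus_{x\in X}M$, hence that $\gen{\kre{X}}\cap\ker\kre{\partial}_1=L/R$ with $L=\widetilde{\partial}^{-1}(\gen{2\sum_i\gamma_i})$ and $R$ generated by the four families $2\sum_i[x]\otimes\gamma_i$; and your final rewriting of the generators of $R$ in the basis (G1)--(G12) does reproduce $r_{a_j},r_{u_j},r_{b_j},r_{\ro}$, parity split included. (One small slip there: in $r_{b_j}$ the $b_{j,1}$-terms do not cancel; the $j+1$ copies from (G9) and the $-j$ copies from (G10) combine to the $+2b_{j,1}$ that is needed to complete $2\sum_i b_{j,i}$.) The genuine gap is in the step you yourself call the crux: to show that (G2),(G4)--(G7),(G9)--(G11) exhaust the relation lattice among the active symbols (equivalently, that (G1)--(G11) generate all of $\ker\widetilde{\partial}$), you propose ``a rank count.'' Over $\zz$ that is not a proof. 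A subgroup of a lattice having the same rank need only be of \emph{finite index} in it, not equal to it. Concretely, if you replaced (G7) by $2(a_{1,1}+\ro_1-u_{1,1})$, every rank you invoke would be unchanged, yet the presented group would be different; and since the answer in Theorem \ref{MainThm} consists entirely of $\zz_2$-summands, an index-two error in exactly this lattice computation is the kind of error that silently corrupts the final result. (For linear independence, by contrast, ranks suffice, since independence over $\zz$ and over the rationals agree.)

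The missing ingredient is integrality, and the tool to supply it is already in your write-up, namely the triangular change of basis. Verify that (G2),(G4)--(G7),(G9)--(G11) together with the complementary symbols $\KL{a_{j,j}\st j=1,\dotsc,g-1}\cup\KL{u_{1,1}}$ generate the span $F_1$ of the active symbols: each active symbol is recovered with coefficient $\pm1$ ($a_{j,j+1}$ from (G2); $u_{j,j+1}$ from (G4); $u_{j,j}$, $j\geq 2$, inductively from (G5); $\ro_1$ from (G7); $\ro_{j+1}$ inductively from (G6); $b_{j,1}$ from (G11); $b_{j,2i}$, $b_{j,2i+1}$ from (G9),(G10)). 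Since their number equals the rank of $F_1$, they form a $\zz$-basis, so the subgroup spanned by (G2),(G4)--(G7),(G9)--(G11) is a direct summand of $F_1$; a direct summand contained in $\ker\widetilde{\partial}\cap F_1$ and of the same rank must equal it, because the quotient would be simultaneously torsion and a subgroup of a free abelian group. (Note that for $g$ even (G12) is twice a primitive vector of $F$, so it cannot enter such a basis of $F$; it must be handled exactly as you do handle it, as a $\zz$-generator of $L/\ker\widetilde{\partial}$, using that $\widetilde{\partial}$ sends (G12) to $2\sum_i\gamma_i$.) The paper secures the same integrality by a different device: it fixes the normal form in which $a_{j,g},u_{j,g},b_{j,g},\ro_g$ carry coefficient $0$ or $1$, peels an arbitrary kernel element apart in seven uniquely determined steps $h=h_1+h_2$, $h_2=h_3+h_4$, and so on, and then solves $\kre{\partial}_1(h_{14})=0$ exactly over $\zz$ to conclude that the residual piece $h_{14}=\sum_j\alpha_j a_{j,j}+\alpha u_{1,1}$ is an integer multiple of (G12). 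That terminal Diophantine computation is precisely what your rank count omits; with the basis-extension argument above (or with the paper's explicit reduction) inserted, your proof goes through.
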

\begin{proof}
 By Theorem \ref{ParSzep2}, $\gen{\kre{X}}$ is generated by $a_{j,i},u_{j,i},b_{j,i}$ and $\ro_i$. Using formulas \ref{eq:partial}, it is straightforward to check that elements (G1)--(G12) are elements of $\ker\kre{\partial}_1$. Moreover,
 \[2a_{j,1}+2a_{j,2}+\cdots+2a_{j,g}=[a_{j}]\otimes 2(\gamma_1+\cdots +\gamma_g)=0,\]
hence $r_{a_j}$ is indeed a relation. Similarly we check that $r_{u_j},r_{b_j}$ and $r_{\ro}$ are relations.

Observe that using relations $r_{a_j},r_{u_j},r_{b_j}$ and $r_{\ro}$ we can substitute for $2a_{j,g}$, $2u_{j,g}$, $2b_{j,g}$ and $2\ro_g$ respectively, hence
each element in $\gen{\kre{X}}$ can be written as a linear combination
of $a_{j,i},u_{j,i},b_{j,i},\ro_i$, where each of $a_{j,g},u_{j,g},b_{j,g},\ro_g$ has the coefficient 0 or 1. Moreover, 
for a given $x\in \gen{\kre{X}}\subset C_1(G)\otimes H_1(N_g;\zz)$ such a combination is unique. Hence for the rest of the proof we assume that linear combinations of $a_{j,i},u_{j,i},b_{j,i},\ro_j$ satisfy this condition.

Suppose that $h\in\gen{\kre{X}}\cap\ker\kre{\partial}_1$. We will show that $h$ can be uniquely expressed as a linear combination of generators (G1)--(G12).

We decompose $h$ as follows:
\begin{itemize}
 \item $h=h_0=h_1+h_2$, where $h_1$ is a combination of generators (G1)--(G2) and $h_2$ does not contain $a_{j,i}$ with $i\neq j$;
 \item $h_2=h_3+h_4$, where $h_3$ is a combination of generators (G3)--(G4) and $h_4$ does not contain $u_{j,i}$ with $i\neq j$;
 \item $h_4=h_5+h_6$, where $h_5$ is a combination of generators (G5) and $h_6$ does not contain $u_{j,j}$ for $j>1$;
 \item $h_6=h_7+h_8$, where $h_7$ is a combination of generators (G6) and $h_8$ does not contain $\ro_{j}$ for $j>1$;
 \item $h_8=h_9+h_{10}$, where $h_9$ is a multiple of generator (G7) and $h_{10}$ does not contain $\ro_{1}$;
 \item $h_{10}=h_{11}+h_{12}$, where $h_{11}$ is a combination of generators (G8)--(G10) and $h_{12}$ does not contain $b_{j,i}$ for $i\neq 1$;
 \item $h_{12}=h_{13}+h_{14}$, where $h_{13}$ is a combination of generators (G11) and $h_{14}$ does not contain $b_{j,i}$.
\end{itemize}
Observe also that for each $k=0,\ldots,12$, $h_{k+1}$ and $h_{k+2}$ are uniquely determined by $h_k$. Element $h_{14}$ has the form \[h_{14}=\sum_{j=1}^{g-1}\alpha_ja_{j,j}+\alpha u_{1,1}\]
for some integers $\alpha,\alpha_1,\ldots,\alpha_{g-1}$. Hence
\[0=\kre{\partial}_1(h_{14})=(\alpha_1-\alpha)\gamma_1+(\alpha_1+\alpha_2+\alpha)\gamma_2+(\alpha_2+\alpha_3)\gamma_3+\cdots+\alpha_{g-1}\gamma_g.\]
If $g$ is odd this implies that 
\[\alpha_2=\alpha_4=\cdots=\alpha_{g-1}=2k,\ \alpha_3=\alpha_5=\cdots=\alpha_{g-2}=0,\ \alpha_1=k,\ \alpha=-k\]
for some $k\in\zz$. For $g$ even we get 
\[\alpha_1=\alpha_3=\alpha_5=\cdots=\alpha_{g-1}=2k,\ \alpha_2=\alpha_4=\cdots=\alpha_{g-2}=0,\ \alpha=0.\]
In both cases $h_{14}$ is a multiple of the generator (G12).
\end{proof}
By an analogous argument we get
\begin{prop}
 Let $g\geq 3$ and $G={\Cal{M}}(N_{g,1})$. Then $\gen{\kre{X}}\cap \ker\kre{\partial}_1$ is the abelian group generated freely by:
 \begin{enumerate}
 \item[(G1)] $a_{j,i}$ for $j=1,\ldots,g-1$ and $i=1,\ldots,j-1,j+2,\ldots,g$,
\item[(G2)] $a_{j,j}+a_{j,j+1}$ for $j=1,\ldots,g-1$,
\item[(G3)] $u_{j,i}$ for $j=1,2,\ldots,\min\{5,g-1\}$ and $i=1,\ldots,j-1,j+2,\ldots,g$,
\item[(G4)] $u_{j,j}+u_{j,j+1}$ for $j=1,2,\ldots,\min\{5,g-1\}$,
\item[(G5)] $a_{j,j}-a_{j+1,j+1}+u_{j,j}+u_{j+1,j+1}$ for $j=1,2,\ldots,\min\{5,g-1\}$,
\item[(G8)] $b_{j,i}$ for $i=2j+3,\ldots,g$,
\item[(G9)] $b_{j,2i}+b_{j,1}$ for $i=1,\ldots,j+1$,
\item[(G10)] $b_{j,2i+1}+b_{j,1}$ for $i=1,\ldots,j$,
\item[(G11)] $b_{j,1}-a_{1,1}-a_{3,3}-\cdots-a_{2j+1,2j+1}$.
\end{enumerate}
\end{prop}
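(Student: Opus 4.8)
The plan is to run the argument of the previous proposition almost verbatim, keeping track of the two respects in which the bordered surface $N_{g,1}$ differs from the closed one. First, $H_1(N_{g,1};\zz)$ is now \emph{freely} generated by $\gamma_1,\ldots,\gamma_g$, so there is no torsion relation $2(\gamma_1+\cdots+\gamma_g)=0$; consequently the auxiliary relations $r_{a_j},r_{u_j},r_{b_j},r_{\ro}$ vanish and the conclusion upgrades from ``admits a presentation'' to ``is freely generated.'' Second, by Theorem \ref{ParSzep1} the generating set of ${\Cal{M}}(N_{g,1})$ contains no element $\ro$, so the generators (G6), (G7), (G12) and every step of the decomposition involving $\ro$ are simply dropped.

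Concretely, I would first check, using formulas \ref{eq:partial} exactly as before, that (G1)--(G5) and (G8)--(G11) lie in $\ker\kre{\partial}_1$. Then, to prove that they freely generate the intersection, I would take an arbitrary $h\in\gen{\kre{X}}\cap\ker\kre{\partial}_1$ and peel off generators successively: a combination of (G1)--(G2) to remove all $a_{j,i}$ with $i\neq j$, then (G3)--(G4) for the $u_{j,i}$ with $i\neq j$, then (G5) to remove $u_{j,j}$ with $j>1$, then (G8)--(G10) for the $b_{j,i}$ with $i\neq 1$, and finally (G11) for the $b_{j,1}$. At each stage the two summands are uniquely determined by their predecessor, which simultaneously yields spanning and linear independence; moreover, since $C_1(G)\otimes H_1(N_{g,1};\zz)$ is itself free abelian, the intersection is automatically a free abelian subgroup, so it remains only to exhibit these elements as a basis.

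The crux is the final remainder, which by construction has the shape $h_{10}=\sum_{j=1}^{g-1}\alpha_j a_{j,j}+\alpha\,u_{1,1}$. Imposing $\kre{\partial}_1(h_{10})=0$ and reading off the coefficients of the free basis produces a triangular system: $\alpha_{g-1}=0$ from $\gamma_g$, then $\alpha_{g-2}=\cdots=\alpha_2=0$ cascading downward, and finally $\alpha=\alpha_1$ together with $2\alpha_1=0$ from $\gamma_1$ and $\gamma_2$, which forces $h_{10}=0$. I expect this to be the only genuinely delicate point, since it is exactly where the two proofs diverge: in the closed case the torsion relation made the same system leave a one-parameter family, realised by (G12), whereas here the absence of that relation collapses it to $0$. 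The remaining care is clerical, namely ensuring that the index ranges and the truncation $\min\{5,g-1\}$ are interpreted consistently for the small values $g=3,\ldots,6$, exactly as in the closed case.
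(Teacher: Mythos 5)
Your proposal is correct and is essentially the paper's own proof: the paper disposes of this proposition with the single phrase ``by an analogous argument,'' and the adaptation you spell out --- dropping the $\ro$-generators (G6), (G7), (G12) since $\ro$ is absent from the presentation of ${\Cal M}(N_{g,1})$, dropping the relations $r_{a_j},r_{u_j},r_{b_j},r_{\ro}$ since $H_1(N_{g,1};\zz)$ is torsion-free, and showing the final remainder $\sum_j\alpha_j a_{j,j}+\alpha u_{1,1}$ collapses to $0$ via the triangular system (rather than to the one-parameter family (G12)) --- is exactly that argument. One remark: carrying out your kernel check on (G10) literally as printed would expose a sign typo in the statement, since $b_{j,2i+1}+b_{j,1}$ maps to $2(\gamma_1+\cdots+\gamma_{2j+2})\neq 0$ in the now free module; the generator should read $b_{j,2i+1}-b_{j,1}$, as in the closed case.
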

 \section{Computing $H_1({\Cal{M}}(N_{g,s});H_1(N_{g,s};\zz))$}
Using formula \eqref{eq_rew_rel} we rewrite relations from Theorems \ref{ParSzep1} and \ref{ParSzep2} as relations in 
$H_1({\Cal{M}}(N_{g,s});H_1(N_{g,s};\zz))$
\subsection*{(A1)--(A2)}
Relation (A1) 
gives 
\[\begin{aligned}r^{(A1)}_{j,k:i}\!:0&=([a_j]+a_j[a_k]-[a_k]-a_k[a_j])\otimes \gamma_i\\
&=a_{j,i}+[a_k]\otimes \psi(a_j^{-1})\gamma_i-a_{k,i}-[a_j]\otimes \psi(a_k^{-1})\gamma_i\\
&=\pm\begin{cases}
   0&\text{if $i\not\in\{ j,j+1,k,k+1\}$}\\
a_{k,j}+a_{k,j+1}&\text{if $i\in\{j,j+1\}$}\\
a_{j,k}+a_{j,k+1}&\text{if $i\in\{k,k+1\}$.}
  \end{cases}
  \end{aligned}
\]
Relation (A2) gives
\[\begin{aligned}r^{(A2)}_{j:i}\!:0=&
([a_j]+a_{j}[a_{j+1}]+a_{j}a_{j+1}[a_{j}]\\
&-[a_{j+1}]-a_{j+1}[a_{j}]-a_{j+1}a_{j}[a_{j+1}])\otimes \gamma_i\\
=&[a_j]\otimes (I_g+\psi(a_{j+1}^{-1}a_j^{-1})-\psi(a_{j+1}^{-1}))\gamma_i\\
&+[a_{j+1}]\otimes(\psi(a_j^{-1})-I_g-\psi(a_j^{-1}a_{j+1}^{-1}))\gamma_i\\
=&\begin{cases}
                 a_{j,i}-a_{j+1,i}&\text{\hspace{-1.5cm}if $i\not\in\{j,j+1,j+2\}$}\\
 a_{j,j+2}-a_{j+1,j}&\text{\hspace{-1.5cm}if $i=j+2$}\\
(*)+2(a_{j,j}+a_{j,j+1})&\text{\hspace{-1.5cm}if $i=j$}\\
(*)-(a_{j,j}+a_{j,j+1})-(a_{j+1,j+1}+a_{j+1,j+2})&\text{if $i=j+1$.}
                 \end{cases}
\end{aligned}\]
In the above formula $(*)$ denotes some expression homologous to 0 by previously obtained relations. As we progress further, we will often perform simplifications based on previously obtained relations, from now on we will use symbol '$\equiv$' in such cases.

Carefully 
checking relations $r^{(A1)}_{j,k:i}$ and $r^{(A2)}_{j:i}$ we conclude that generators (G1) generate a cyclic group,
and generators (G2) generate a cyclic group of order at most 2.
\subsection*{(C4)}
Relation (C4) gives
\[\begin{aligned}
   r^{(C4)}_{i}\!:0=&([a_1]+a_1[u_1]+a_1u_1[a_1]-[u_1])\otimes\gamma_i\\
   =&a_{1,i}+[u_1]\otimes\psi(a_1^{-1})\gamma_i+[a_1]\otimes \psi(u_1^{-1}a_1^{-1})\gamma_i-u_{1,i}\\
   =&\pm\begin{cases}
      (u_{1,1}+u_{1,2})+2(a_{1,1}+a_{1,2})&\text{for $i=1$}\\
      u_{1,1}+u_{1,2}&\text{for $i=2$}\\
      2a_{1,i}=0&\text{for $i>2$.}
     \end{cases}
  \end{aligned}
\]
Hence the cyclic group generated by generators (G1) has order at most 2 and generator (G4) is trivial for $j=1$.
\subsection*{(C1)}
Relation (C1) gives
\[\begin{aligned}
   r^{(C1)}_{j:i}\!:0=&([u_1]+u_1[a_j]-[a_j]-a_j[u_1])\otimes \gamma_i\\
   =&u_{1,i}+[a_j]\otimes\psi(u_1^{-1})\gamma_i-a_{j,i}-[u_1]\otimes \psi(a_j^{-1})\gamma_i\\
   =&\pm \begin{cases}
      a_{j,1}-a_{j,2}&\text{for $i\in\{1,2\}$}\\
      u_{1,j}+u_{1,j+1}&\text{for $i\in\{j,j+1\}$}\\
      0&\text{for $i\not\in\{ 1,2,j,j+1\}$.}
     \end{cases}
  \end{aligned}
\]
The above relation implies that generators (G3) of the form $u_{1,j}$ where $j\geq 3$ generate a cyclic group (note that this statement holds also for $g=3$).
\subsection*{(C5)}
Relation (C5) gives 
\[\begin{aligned}
   r^{(C5)}_{j:i}\!:0=&([u_{j+1}]+u_{j+1}[a_j]+u_{j+1}a_j[a_{j+1}]+u_{j+1}a_ja_{j+1}[u_j]-[a_j]-a_j[a_{j+1}])\otimes\gamma_i\\
   =&u_{j+1,i}+[a_j]\otimes(\psi(u_{j+1}^{-1})-I_g)\gamma_i+[a_{j+1}]\otimes(\psi(a_j^{-1}u_{j+1}^{-1})-\psi(a_j^{-1}))\gamma_i\\
   &+[u_j]\otimes \psi(a_{j+1}^{-1}a_j^{-1}u_{j+1}^{-1})\gamma_i\\
   &\equiv\begin{cases}
      u_{j+1,i}+u_{j,i}&\text{if $i\not\in\{j,j+1,j+2\}$}\\
      (u_{j+1,j}+u_{j,j+2})+2(u_{j,j}+u_{j,j+1})&\text{if $i=j$}\\
      (a_{j,j}-a_{j+1,j+1}+u_{j,j}+u_{j+1,j+1})\\
      \quad\ -(u_{j,j}+u_{j,j+1})&\text{if $i=j+1$}\\
            -(a_{j,j}-a_{j+1,j+1}+u_{j,j}+u_{j+1,j+1})\\
      \quad\ +(u_{j+1,j+1}+u_{j+1,j+2})&\text{if $i=j+2$.}
     \end{cases}
   \end{aligned}\]
   From the last two equalities we obtain that generators (G4) generate a cyclic group, which by relation (C4) is trivial. Hence generators (G5) are also trivial.
   
   Now we are going to show that generators (G3) generate a cyclic group.
   
   If $k<i-1$ then from the first equality and relation (C1) we have
   \[u_{k,i}=-u_{k-1,i}=\cdots=\pm u_{1,i}=\pm u_{1,3}.\]
   As for the elements $u_{k,i}$ with $k>i$, by the the first two equalities, 
   \[\begin{aligned}
      u_{5,4}&=-u_{4,6}\\
      u_{5,3}&=-u_{4,3}=u_{3,5}\\
      u_{5,2}&=-u_{4,2}=u_{3,2}=-u_{2,4}\\
      u_{5,1}&=-u_{4,1}=u_{3,1}=-u_{2,1}=u_{1,3}.
     \end{aligned}
\]
Hence in fact generators (G3) generate a cyclic group.
\subsection*{(B2)}
Relation (B2) gives
\[\begin{aligned}r^{(B2)}_{i}\!:0&=
([u_1]+u_{1}[u_{2}]+u_{1}u_{2}[u_{1}]
-[u_{2}]-u_{2}[u_{1}]-u_{2}u_{1}[u_{2}])\otimes \gamma_i\\
&=\begin{cases}
                 u_{1,i}-u_{2,i}&\text{if $i>3$}\\
 u_{1,3}-u_{2,1}&\text{if $i\in\{1,3\}$}\\
(u_{1,1}+u_{1,2})-(u_{2,2}+u_{2,3})-(u_{1,3}-u_{2,1})&\text{if $i=2$.}
                 \end{cases}
\end{aligned}\]
In particular, $u_{1,3}=u_{2,1}$. But relation (C5) implies that $u_{1,3}=-u_{2,1}$. Hence the cyclic group generated by generators (G3) has order at most~2.
\subsection*{(B1)}
Relation (B1) gives
\[\begin{aligned}r^{(B1)}_{i}\!:0&=([u_1]+u_1[u_3]-[u_3]-u_3[u_1])\otimes \gamma_i\\
&=\pm\begin{cases}
   0&\text{if $i>4$}\\
   u_{3,2}-u_{3,1}&\text{if $i\in\{1,2\}$}\\
   u_{1,4}-u_{1,3}&\text{if $i\in\{3,4\}$.}
  \end{cases}
  \end{aligned}
\]
This relation gives no new information.
\subsection*{(C2)}
Relation (C2) gives
\[\begin{aligned}
   r^{(C2)}_{i}\!:0&=([a_1]+a_1[u_2]+a_1u_2[u_1]-[u_2]-u_2[u_1]-u_2u_1[a_2])\otimes \gamma_i\\
   &=\pm \begin{cases}
   a_{1,i}-a_{2,i}&\text{if $i>3$}\\
   (a_{1,1}-a_{2,2}+u_{1,1}+u_{2,2})+(u_{1,3}+u_{2,1})&\text{if $i=1$}\\
   (a_{1,1}+a_{1,2})-(a_{2,2}+a_{2,3})\\
   \quad -(a_{1,1}-a_{2,2}+u_{1,1}+u_{2,2})-(u_{1,3}+u_{2,1})&\text{if $i=2$}\\
   a_{1,3}-a_{2,1}&\text{if $i=3$.}
  \end{cases}
\end{aligned}\]
This relation gives no new information.

Note that at this point we proved Theorem \ref{MainThm} for $N=N_{3,1}$,
\[\begin{aligned}H_1({\Cal{M}}(N_{3,1});H_1(N_{3,1};\zz))&\cong \gen{a_{1,3},a_{1,1}+a_{1,2},u_{1,3}}
\cong \zz_2\oplus\zz_2\oplus\zz_2.\end{aligned}\]
Observe also that for $N=N_{3,0}$, Theorem \ref{MainThm} follows from Theorem 5.4 of \cite{PresHiperNon}, hence from now we assume that $g\geq 4$.

\subsection*{(A3)} Relation (A3) gives
\[\begin{aligned}
    r^{(A3)}_{j:i}\!:0&=([a_j]+a_j[b_1]-[b_1]-b_1[a_j])\otimes \gamma_i\\
   &=a_{j,i}+[b_1]\otimes \psi(a_j^{-1})\gamma_i-b_{1,i}-[a_j]\otimes \psi(b_1^{-1})\gamma_i.
  \end{aligned}
\]
If $i\neq j$ and $i\neq j+1$, then we get
\[0\equiv\begin{cases}
(a_{1,1}+a_{1,2})+(a_{1,3}+a_{1,4})&\text{if $j=1$ and $i\in\{3,4\}$}\\
(a_{2,2}+a_{2,3})+(a_{2,1}+a_{2,4})&\text{if $j=2$ and $i\in\{1,4\}$}\\
(a_{3,3}+a_{3,4})+(a_{3,1}+a_{3,2})&\text{if $j=3$ and $i\in\{1,2\}$}\\
   0&\text{if $j>4$ or $i>4$}
  \end{cases}\]
By relation $r^{(A1)}$, this implies that generators (G2) are trivial for $g\geq 4$.

For $i=j$ and $i=j+1$ we get
\[0=\pm\begin{cases}
     (b_{1,1}+b_{1,2})-(a_{1,1}+a_{1,2})-(a_{1,3}+a_{1,4})&\text{if $j=1$}\\
     (b_{1,1}+b_{1,2})+(b_{1,3}-b_{1,1})+(a_{2,2}+a_{2,3})+(a_{2,1}+a_{2,4})&\text{if $j=2$}\\
     (b_{1,3}-b_{1,1})+(b_{1,1}+b_{1,4})-(a_{3,3}+a_{3,4})-(a_{3,1}+a_{3,2})&\text{if $j=3$}\\
     b_{1,j}+b_{1,j+1}&\text{if $j>4$.}
    \end{cases}
\]
This implies that generators (G8) for $j=1$ generate a cyclic group, and generators (G9)--(G10) are trivial for $j=1$.
\subsection*{(A4)}
Relation (A4) gives
\[\begin{aligned}
   r^{(A4)}_{i}\!:0&=([b_1]+b_1[a_4]+b_1a_4[b_1]-[a_4]-a_4[b_1]-a_4b_1[a_4])\otimes \gamma_i=\\
   &=b_{1,i}+[a_4]\otimes \psi(b_1^{-1})\gamma_i+[b_1]\otimes\psi(a_4^{-1}b_1^{-1})\gamma_i-a_{4,i}\\
   &\quad -[b_1]\otimes \psi (a_4^{-1})\gamma_i-[a_4]\otimes\psi(b_1^{-1}a_4^{-1})\gamma_i\\
  &\equiv\begin{cases}
   \pm(b_{1,5}-a_{4,i})&\text{if $i\in\{1,2,3\}$}\\
   -b_{1,5}-(a_{4,4}+a_{4,5})+(a_{4,1}+a_{4,2}+a_{4,3})
   &\text{if $i=4$}\\
   b_{1,5}-a_{4,1}-a_{4,2}-a_{4,3}&\text{if $i=5$}\\   
   b_{1,i}-a_{4,i}&\text{if $i>5$.}\\
  \end{cases}
  \end{aligned}
\]
This implies that generators (G8) for $j=1$ are in the cyclic group generated by generators (G1).
\subsection*{(C6) and simplification trick}
Relation (C6) gives
\[\begin{aligned}
   r^{(C6)}_{i}\!:0&=(1+u_3b_1)([u_3]+u_3[b_1])\otimes\gamma_i-(1+a_1a_2a_3)([a_1]+a_1[a_2]+a_1a_2[a_3])\otimes\gamma_i\\
   &\quad-(a_1a_2a_3)^2(1+u_1u_2u_3)([u_1]+u_1[u_2]+u_1u_2[u_3])\otimes\gamma_i
  \end{aligned}
\]
For $i>4$ this easily yields
\[2b_{1,i}-2(a_{1,i}+a_{2,i}+a_{3,i})-2(u_{1,i}+u_{2,i}),\]
which gives no new information.

The computations for $i\in \{1,2,3\}$ are a bit cumbersome, hence before we go into the details, we make a couple of observations which will greatly simplify the situation.
\begin{itemize}
 \item Observe first that if we express $r^{(C6)}_{i}$, $i\in\{1,2,3\}$ as a combination of generators (G1)--(G5),(G8)--(G12), as we did before for other relations, we will obtain an expression without generators (G12). The reason for this is that relation (C6) does not depend on $g$, hence the resulting expression will be the same for all $g\geq 4$ and it will not contain $a_{j,j}$ with $j\geq 4$.
 \item In the obtained expression for $r^{(C6)}_{i}$, $i\in\{1,2,3\}$ we can perform the following changes, which are consequences of previously obtained relations.
 \begin{itemize}
 \item We replace each generator (G1) with $a_{1,3}$ and each generator (G3) with $u_{1,3}$. Moreover, since $2a_{1,3}=0$, we can assume that $a_{1,3}$ has the coefficient 0 or 1. The same for $u_{1,3}$.
 \item Since generators (G2) and (G4) are trivial, we can remove all elements $a_{j,j+1}$ and $u_{j,j+1}$ (we replace them respectively by $-a_{j,j}$ and $-u_{j,j}$).
 \item Since generators (G5) are trivial, we remove elements $u_{j,j}$ for $j>1$. Observe that the obtained expression automatically does not contain $u_{1,1}$.
 \item Since generators (G9)-(G10) are trivial for $j=1$, we replace elements $b_{1,i}$ for $i>1$ with $\pm b_{1,1}$.
 \end{itemize}
 \item As the result of the above changes we obtain an expression for $r^{(C6)}_{i}$ which involves only $a_{1,3}$, $u_{1,3}$ and generators (G8), (G11). Observe also that coefficients of generators (G11) are uniquely determined by coefficients of corresponding $b_{j,1}$.
\end{itemize}
The above analysis implies that during the transformation of $r^{(C6)}_{i}$ we can completely ignore elements $a_{j,j},a_{j,j+1},u_{j,j},u_{j,j+1}$, and the elements $a_{j,i}, u_{j,i}$ with $i\not\in\{j,j+1\}$ are subject to the equivalence
 $\pm a_{j,i}\equiv a_{1,3}$, $\pm u_{j,i}\equiv u_{1,3}$. In particular we can ignore all elements of the form
 \[[a_j]\otimes 2m,\ [u_j]\otimes 2m,\ \text{for }m\in H_1(N_g;\zz)\]
and as a result
\[\begin{aligned}
   \, [a_j]&\otimes \psi(a_k^{\pm 1})\gamma_i\equiv[a_j]\otimes \psi(u_k)\gamma_i,\\
   [u_j]&\otimes \psi(a_k^{\pm 1})\gamma_i\equiv [u_j]\otimes \psi(u_k)\gamma_i.\\
  \end{aligned}
\]
Consequently, if $w$ is any word in letters 
\[\{a_1^{\pm 1},\ldots,a_{g-1}^{\pm 1},u_1^{\pm 1},\ldots,u_{g-1}^{\pm 1},b_{1}^{\pm 1}\}\]
and $w'$ is the word obtained from $w$ by replacing each $a_k^{\pm 1}$ with $u_k$
then
\[\begin{aligned}
   \ [a_j]&\otimes \psi (w)\gamma_i\equiv [a_j]\otimes \psi (w')\gamma_i,\\
   [u_j]&\otimes \psi (w)\gamma_i\equiv [u_j]\otimes \psi (w')\gamma_i.
  \end{aligned}
\]
Returning to the relation $r^{(C6)}_{i}$, it is equivalent to 
\[\begin{aligned}
  r^{(C6)}_{i}\!:0&=([u_3]+u_3[b_1])\otimes(I_g+\psi(b_1^{-1}u_3))\gamma_i\\
  &-([a_1]+u_1[a_2]+u_1u_2[a_3])\otimes(I_g+\psi(u_3u_2u_1))\gamma_i\\
  &-([u_1]+u_1[u_2]+u_1u_2[u_3])\otimes(\psi((u_3u_2u_1)^2)+\psi((u_3u_2u_1)^3))\gamma_i\\
  &\equiv \begin{cases}
     \pm 2(b_{1,1}-a_{1,1}-a_{3,3})&\text{if $i\in\{1,2\}$}\\
     0&\text{if $i\in\{3,4\}$.}
    \end{cases}
  \end{aligned}
\]
Hence generator (G11) has order 2 for $j=1$. In particular, as in the case of $[a_{j}]$ and $[u_j]$, we can ignore elements of the form 
\[[b_1]\otimes 2m,\ \text{for }m\in H_1(N_g;\zz).\]
Consequently,
\[[b_1]\otimes \psi (w)\gamma_i\equiv[b_1]\otimes \psi (w')\gamma_i,\]
where $w'$ is the word obtained from $w$ by replacing each $a_k^{\pm 1}$ with $u_k$.

Note that at this point we have proved Theorem \ref{MainThm} for $N=N_{4,1}$, 
\[\begin{aligned}H_1({\Cal{M}}(N_{4,1});H_1(N_{4,1};\zz))&\cong \gen{a_{1,3},u_{1,3},b_{1,1}-a_{1,1}-a_{3,3}}
\cong \zz_2\oplus\zz_2\oplus\zz_2.\end{aligned}\]
\subsection*{(C7)}
Relation (C7) gives
\[\begin{aligned}
   r^{(C7)}_{i}\!:0&=([u_5]+u_5[b_1]-[b_1]-b_1[u_5])\otimes \gamma_i\\
   &=[u_5]\otimes(I_g-\psi(b_1^{-1}))\gamma_i-[b_1]\otimes(I_g-\psi(u_5))\gamma_i\\
   &=\pm\begin{cases}
   0&\text{if $i>6$}\\
      b_{1,5}-b_{1,6}&\text{if $i\in \{5,6\}$}\\
      u_{5,1}+u_{5,2}+u_{5,3}+u_{5,4}&\text{if $i\in \{1,2,3,4\}$.}
     \end{cases}
  \end{aligned}
\]
 This relation gives no new information.
\subsection*{(C8)}
Since $u_j^2=id$ as an automorphism of $H_1(N;\zz)$, relation (C8) gives
\[\begin{aligned}
   r^{(C8)}_i\!:0&=([a_4]+a_4[u_4]+a_4u_4[a_4]+a_4u_4a_4[a_3]+a_4u_4a_4a_3[a_2])\otimes\gamma_i\\
   &\ +(a_4u_4a_4a_3a_2[a_1]+a_4u_4a_4a_3a_2a_1[u_1]+
   a_4u_4a_4a_3a_2a_1u_1[u_2])\otimes\gamma_i\\
   &\ +(a_4u_4a_4a_3a_2a_1u_1u_2[u_3]+a_4u_4a_4a_3a_2a_1u_1u_2u_3[u_4])\otimes\gamma_i\\
   &\ +(a_4u_4a_4a_3a_2a_1u_1u_2u_3u_4[b_1]-
   [b_1]-b_1[a_4]-b_1a_4[u_4])\otimes\gamma_i\\
   &=([a_4]+u_4[u_4]+[a_4]+u_4[a_3]+u_4u_3[a_2]+u_4u_3u_2[a_1])\otimes\gamma_i\\
   &\ +(u_4u_3u_2u_1[u_1]+
   u_4u_3u_2[u_2]+u_4u_3[u_3]+u_4[u_4])\otimes\gamma_i\\   
   &\ +([b_1]-
   [b_1]-b_1[a_4]-b_1u_4[u_4])\otimes\gamma_i\\
   &=(u_4[a_3]+u_4u_3[a_2]+u_4u_3u_2[a_1]+u_4u_3u_2u_1[u_1])\otimes\gamma_i\\
   &\ +(u_4u_3u_2[u_2]+u_4u_3[u_3]-b_1[a_4]-b_1u_4[u_4])\otimes\gamma_i.
  \end{aligned}
\]
Now it is straightforward to check that this relation gives no new information.
\subsection*{(A5)}
Relation (A5) gives
\[\begin{aligned}
   r^{(A5)}_{i}\!:0&=\left(\sum_{p=0}^{9}(a_2a_3a_4b_1)^p\right)([a_2]+a_2[a_3]+a_2a_3[a_4]+a_2a_3a_4[b_1])\otimes\gamma_i\\
   &\ -\left(\sum_{p=0}^{5}(a_1a_2a_3a_4b_1)^p\right)([a_1]+a_1[a_2]+a_1a_2[a_3]+a_1a_2a_3[a_4]\\
   &\quad\quad\quad+a_1a_2a_3a_4[b_1])\otimes\gamma_i\\
   &=([a_2]+a_2[a_3]+a_2a_3[a_4]+a_2a_3a_4[b_1])\otimes\left(\sum_{p=0}^{9}A^p\right)\gamma_i\\
   &\ -([a_1]+a_1[a_2]+a_1a_2[a_3]+a_1a_2a_3[a_4]+a_1a_2a_3a_4[b_1])\otimes \left(\sum_{p=0}^{5}B^p\right),
  \end{aligned}
\]
where $A=\psi(b_1^{-1}a_4^{-1}a_3^{-1}a_2^{-1})$ and $B=A\circ \psi(a_1^{-1})$. 

It is straightforward to verify that 
\[\begin{aligned}
   A&=\begin{bmatrix}
                          2&-2&1&-1&1\\
                          1&0&0&-1&1\\
                          1&0&1&-2&1\\
                          1&0&1&-1&0\\
                          0&1&0&0&0
                         \end{bmatrix}\oplus I_{g-5},&
                         B&=\begin{bmatrix}
                          2&-2&1&-1&1\\
                          2&-1&0&-1&1\\
                          2&-1&1&-2&1\\
                          2&-1&1&-1&0\\
                          1&0&0&0&0
                         \end{bmatrix}\oplus I_{g-5}
  \end{aligned}
\]
and 
\[\sum_{p=0}^{9}A^p=10C,\quad\text{ }\quad \sum_{p=0}^{5}B^p=6C,\]
where
\[C=\begin{bmatrix}
     1&-1&1&-1&1\\
     1&-1&1&-1&1\\
     1&-1&1&-1&1\\
     1&-1&1&-1&1\\
     1&-1&1&-1&1\\
    \end{bmatrix}\oplus I_{g-5}.
\]
Hence this relation gives no new information.

Note that at this point we proved Theorem \ref{MainThm} for $N=N_{5,1}$,
\[\begin{aligned}H_1({\Cal{M}}(N_{5,1});H_1(N_{5,1};\zz))&= \gen{a_{1,3},u_{1,3},b_{1,1}-a_{1,1}-a_{3,3}}
\cong \zz_2\oplus\zz_2\oplus\zz_2.\end{aligned}\]
\subsection*{(A6)}
Relation (A6) gives
\[\begin{aligned}
   r^{(A6)}_{i}\!:0&=\left(\sum_{p=0}^{11}(a_2a_3a_4a_5a_6b_1)^p\right)([a_2]+a_2[a_3]+a_2a_3[a_4]+a_2a_3a_4[a_5]\\
   &\ +a_2a_3a_4a_5[a_6]+a_2a_3a_4a_5a_6[b_1])\otimes\gamma_i\\
   &\ -\left(\sum_{p=0}^{8}(a_1a_2a_3a_4a_5a_6b_1)^p\right)([a_1]+a_1[a_2]+a_1a_2[a_3]+a_1a_2a_3[a_4]\\
   &\quad\quad\quad+a_1a_2a_3a_4[a_5]+a_1a_2a_3a_4a_5[a_6]+a_1a_2a_3a_4a_5a_6[b_1])\otimes\gamma_i\\
   &=([a_2]+a_2[a_3]+\cdots+a_2a_3a_4a_5a_6[b_1])\otimes\left(\sum_{p=0}^{11}A^p\right)\gamma_i\\
   &\ -([a_1]+a_1[a_2]+\cdots+a_1a_2a_3a_4a_5a_6[b_1])\otimes \left(\sum_{p=0}^{8}B^p\right),
  \end{aligned}
\]
where $A=\psi(b_1^{-1}a_6^{-1}a_5^{-1}a_4^{-1}a_3^{-1}a_2^{-1})$ and $B=A\circ \psi(a_1^{-1})$. 

It is straightforward to verify that 
\[\begin{aligned}
   A&=\begin{bmatrix}
                          2&-2&1&-1&1&0&0\\
                          1&0&0&-1&1&0&0\\
                          1&0&1&-2&1&0&0\\
                          1&0&1&-1&0&0&0\\
                          0&2&0&0&0&-1&0\\
                          0&2&0&0&0&0&-1\\
                          0&1&0&0&0&0&0\\
                         \end{bmatrix}\oplus I_{g-7},\\ 
                         B&=\begin{bmatrix}
                          2&-2&1&-1&1&0&0\\
                          2&-1&0&-1&1&0&0\\
                          2&-1&1&-2&1&0&0\\
                          2&-1&1&-1&0&0&0\\
                          2&0&0&0&0&-1&0\\
                          2&0&0&0&0&0&-1\\
                          1&0&0&0&0&0&0\\
                         \end{bmatrix}\oplus I_{g-7}
  \end{aligned}
\]
and 
\[\sum_{p=0}^{11}A^p=12C,\quad\text{ }\quad \sum_{p=0}^{8}B^p=9C,\]
where
\[C=\begin{bmatrix}
     1&-1&1&-1&1&-1&1\\
     1&-1&1&-1&1&-1&1\\
     \cdots\\     
     1&-1&1&-1&1&-1&1\\
    \end{bmatrix}_{7\times 7}\oplus I_{g-7}.
\]
This implies that for $i\leq 7$ relation $r^{(A6)}_{i}$ is equivalent to
\[\begin{aligned}
   r^{(A6)}_{i}\!:0&=([a_1]+u_1[a_2]+u_1u_2[a_3]+u_1u_2u_3[a_4]+u_1u_2u_3u_4[a_5]\\
   &\ +u_1u_2u_3u_4u_5[a_6]+u_1u_2u_3u_4u_5u_6[b_1])\otimes(\gamma_1+\gamma_2+\cdots+\gamma_7)\\
   &=([a_1]+[a_2]+[a_3]+[a_4]+[a_5]+[a_6]+[b_1])\otimes(\gamma_1+\gamma_2+\cdots+\gamma_7)\\
   &\equiv 6\cdot 5 a_{1,3}+b_{1,5}+b_{1,6}+b_{1,7}\equiv a_{1,3}.
  \end{aligned}
\]
Hence generator (G1) is trivial for $g\geq 7$. For $i>7$ we get exactly the same conclusion as above.

Note that at this point we proved Theorem \ref{MainThm} for $N=N_{g,1}$, where $g\geq 7$ is odd,
\[\begin{aligned}&H_1({\Cal{M}}(N_{g,1});H_1(N_{g,1};\zz))=\gen{u_{1,3},b_{1,1}-a_{1,1}-a_{3,3}}=\zz_2\oplus\zz_2\quad\text{for $g\geq 7$ odd.}\end{aligned}\]
\subsection*{(A8) for $j=1$}
If $j=1$, relation (A8) gives
\[\begin{aligned}
   r^{(A8)}_{1:i}\!:0&=b_{2,i}+([a_1]+a_1[a_2]+a_1a_2[a_3]+a_1a_2a_3[a_4]+a_1a_2a_3a_4[a_5])\\
   &\quad \otimes(I_g+M+M^{2}+\cdots+M^{5})b_{2}^{-1}\gamma_i\\
&-([a_1]+a_1[a_2]+a_1a_2[a_3]+a_1a_2a_3[a_4]+a_1a_2a_3a_4[a_5]+a_1a_2a_3a_4a_5[b_1])\\
&\quad \otimes(I_g+N+N^{2}+\cdots N^{4})\gamma_i.
   \end{aligned}\]
Where $M=\psi (a_5^{-1}a_4^{-1}a_3^{-1}a_2^{-1}a_1^{-1})$ and $N=\psi(b_1^{-1})M$. Observe that we can reduce all coefficients in the above relation modulo 2, hence as matrices for $M$ and $N$ we can take
\[\begin{aligned}M&=\psi(u_5u_4u_3u_2u_1)\pmod 2=\begin{bmatrix}
              0&1&0&0&0&0\\
              0&0&1&0&0&0\\
              0&0&0&1&0&0\\
              0&0&0&0&1&0\\
              0&0&0&0&0&1\\
              1&0&0&0&0&0\\
             \end{bmatrix}\oplus I_{g-6},\\
  N&=\psi(b_1^{-1})M\pmod 2=\begin{bmatrix}
              0&0&1&1&1&0\\
              0&1&0&1&1&0\\
              0&1&1&0&1&0\\
              0&1&1&1&0&0\\
              0&0&0&0&0&1\\
              1&0&0&0&0&0\\
             \end{bmatrix}\oplus I_{g-6}.
\end{aligned}\]
We transform $r^{(A8)}_{1:i}$ further as follows.
\[\begin{aligned}
   r^{(A8)}_{1:i}\!:0&\equiv b_{2,i}+\big([a_1]\otimes(A-B)+[a_2]\otimes \psi(u_1)(A-B)+[a_3]\otimes\psi(u_2u_1)(A-B)\\
   &\quad +[a_4]\otimes \psi(u_3u_2u_1)(A-B)+[a_5]\otimes \psi(u_4u_3u_2u_1)(A-B)\big)\gamma_i\\
   &\quad-[b_1]\otimes\psi (u_5u_4u_3u_2u_1)B\gamma_i,
  \end{aligned}
\]
where
\[\begin{aligned}B&=\sum_{p=0}^4N^p\pmod 2=\begin{bmatrix}
                            1&0&1&0&1&0\\
                            0&1&0&1&0&1\\
                            1&0&1&0&1&0\\
                            0&1&0&1&0&1\\
                            1&0&1&0&1&0\\
                            0&1&0&1&0&1
                           \end{bmatrix}\oplus I_{g-6},\\
             A-B&=\sum_{p=0}^5M^p-B\pmod 2=\begin{bmatrix}
             0&1&0&1&0&1\\
                 1&0&1&0&1&0\\
                            0&1&0&1&0&1\\
                            1&0&1&0&1&0\\
                            0&1&0&1&0&1\\
                            1&0&1&0&1&0                            
                \end{bmatrix}\oplus I_{g-6}.
\end{aligned}\]
If $i>6$ this implies that generators (G8) are trivial for $j=2$, and if $i\leq 6$, we have
\[\begin{aligned}
   r^{(A8)}_{1:i}\!:
   0&\equiv   \begin{cases}
               b_{2,i}
               -b_{1,2}-b_{1,4}-b_{1,6}&\text{if $i\in\{1,3,5\}$}\\
               b_{2,i}               
               -b_{1,1}-b_{1,3}-b_{1,5}&\text{if $i\in\{2,4,6\}$}
              \end{cases}
   \\ &\equiv 
   \begin{cases}
	(b_{2,1}-a_{1,1}-a_{3,3}-a_{5,5})-b_{1,6}&\text{if $i=1$}\\
               (b_{2,i}-b_{2,1})+(b_{2,1}-a_{1,1}-a_{3,3}-a_{5,5})-b_{1,6}&\text{if $i\in\{3,5\}$}\\
               (b_{2,i}+b_{2,1})-(b_{2,1}-a_{1,1}-a_{3,3}-a_{5,5})-b_{1,5}&\text{if $i\in\{2,4,6\}$}.
              \end{cases}
  \end{aligned}
\]
Hence if $j=1$ then generators (G9)--(G10) are trivial, and generator (G8) is in the cyclic group of order 2 generated by generators (G1).
\subsection*{(A9a)}
Relation (A9a) gives
\[\begin{aligned}
   r^{(A9a)}_{i}\!:0&=[b_2]\otimes\left(I_g-\psi(b_1^{-1})\right)\gamma_i+[b_1]\otimes\left(\psi(b_2^{-1})-I_g\right)\gamma_i\\
   &\equiv\begin{cases}
      -(b_{2,4}+b_{2,1})-(b_{2,3}-b_{2,1})-(b_{2,2}+b_{2,1})&\text{if $i\leq 4$}\\
      0&\text{if $i>4$.}
     \end{cases}
  \end{aligned}
\]
This relation gives no new information.

Note that at this point we proved Theorem \ref{MainThm} for $N=N_{6,1}$,
\[\begin{aligned}H_1({\Cal{M}}(N_{6,1});H_1(N_{6,1};\zz))&\cong \gen{a_{1,3},u_{1,3},b_{1,1}-a_{1,1}-a_{3,3}}
\cong \zz_2\oplus\zz_2\oplus\zz_2.\end{aligned}\]
\subsection*{(A8) for $j>1$} We transform relation (A8) in exactly the same way, as in the case of $j=1$, however now we know that generators (G1) are trivial (since $g\geq 8$), hence we can completely ignore elements $a_{j,i}$. Moreover, we can inductively assume that generators (G8)--(G10) containing $b_{k,i}$ with $k<j+1$ are trivial, and generators (G11) containing $b_{k,1}$ with $k<j+1$ have order at most 2. Hence $r^{(A8)}_{j:i}$ takes form
\[
   r^{(A8)}_{j:i}\!:0\equiv b_{j+1,i}+[b_{j-1}]\otimes(A-B)\gamma_i-[b_j]\otimes\psi (u_{2j+3}u_{2j+2}u_{2j+1}u_{2j}b_{j-1}^{-1})B\gamma_i,  
\]
where $M=\psi (u_{2j+3}u_{2j+2}u_{2j+1}u_{2j}b_{j-1}^{-1})$,  $N=\psi(b_j^{-1})M$ and
\[A=\sum_{p=0}^5M^p,\ B=\sum_{p=0}^4N^p.\]
Now we will identify matrices for $M,N,A$ and $B$ -- the computations are straightforward, but a bit tedious. Using formulas for $u_j$ and $b_j$, we get 
\[\begin{aligned}
   M\gamma_i&\equiv \begin{cases}
   \gamma_i+(\gamma_1+\cdots+\gamma_{2j-1})+\gamma_{2j+4}&\text{if $i<2j$}\\
   \gamma_1+\cdots+\gamma_{2j-1}&\text{if $i=2j$}\\
   \gamma_{i-1}&\text{if $2j+1\leq i\leq 2j+4$}\\
      \gamma_i&\text{if $i>2j+4$}
     \end{cases}\\
   M^2\gamma_i&\equiv\begin{cases}
   \gamma_i+(\gamma_1+\cdots+\gamma_{2j-1})+\gamma_{2j+3}&\text{if $i<2j$}\\
   \gamma_{2j+4}&\text{if $i=2j$}\\
   \gamma_1+\cdots+\gamma_{2j-1}&\text{if $i=2j+1$}\\
   \gamma_{i-2}&\text{if $2j+2\leq i\leq 2j+4$}\\
      \gamma_i&\text{if $i>2j+4$}
     \end{cases}\\    
     M^4\gamma_i&\equiv\begin{cases}
   \gamma_i+(\gamma_1+\cdots+\gamma_{2j-1})+\gamma_{2j+1}&\text{if $i<2j$}\\
   \gamma_{i+2}&\text{if $2j\leq i\leq 2j+2$}\\
   \gamma_1+\cdots+\gamma_{2j-1}&\text{if $i=2j+3$}\\
   \gamma_{2j}&\text{if $i=2j+4$}\\
      \gamma_i&\text{if $i>2j+4$}
     \end{cases}\\
  \end{aligned}
\]
Hence
\[\begin{aligned}
   (I_g+M)\gamma_i&\equiv\begin{cases}
   (\gamma_1+\cdots+\gamma_{2j-1})+\gamma_{2j+4}&\text{if $i<2j$}\\
   \gamma_1+\cdots+\gamma_{2j-1}+\gamma_{2j}&\text{if $i=2j$}\\
   \gamma_{i-1}+\gamma_i&\text{if $2j+1\leq i\leq 2j+4$}\\
      0&\text{if $i>2j+4$}
     \end{cases}\\
     (I_g+M^2+M^4)\gamma_i&\equiv\begin{cases}
             \gamma_i+\gamma_{2j+1}+\gamma_{2j+3}&\text{if $i< 2j$}\\
             \gamma_{2j}+\gamma_{2j+2}+\gamma_{2j+4}&\text{if $i\in\{2j,2j+2,2j+4\}$}\\
     (\gamma_1+\cdots+\gamma_{2j-1})+\gamma_{2j+1}+\gamma_{2j+3}&\text{if $i\in\{2j+1,2j+3\}$}\\     
                 \gamma_i&\text{if $i>2j+4$}
            \end{cases}
            \end{aligned}
\]
\[
            A=(I_g+M)(I_g+M^2+M^4)\gamma_i\equiv\begin{cases}
                     \gamma_1+\cdots+\gamma_{2j+4}&\text{if $i\leq 2j+4$}\\
                      0&\text{if $i>2j+4$}
                    \end{cases}
\]
Obtained formula and our inductive assumption about generators (G8)--(G10) implies that we can ignore $A$ in the relation $r^{(A8)}_{j:i}$, hence it takes form
\[
   r^{(A8)}_{j:i}\!:0\equiv b_{j+1,i}+[b_{j-1}]\otimes B\gamma_i-[b_j]\otimes\psi (u_{2j+3}u_{2j+2}u_{2j+1}u_{2j}b_{j-1}^{-1})B\gamma_i.
\]
Now we concentrate on the formulas for $N=\psi(b_j^{-1})M$ and $B$.
\[\begin{aligned}
   N\gamma_i&\equiv\begin{cases}
                      \gamma_i+(\gamma_1+\cdots+\gamma_{2j-1})+\gamma_{2j+4}&\text{if $i<2j$}\\
                      \gamma_{2j}+\gamma_{2j+1}+\gamma_{2j+2}&\text{if $i=2j$}\\
   \gamma_{i-1}+(\gamma_1+\cdots+\gamma_{2j+2})&\text{if $2j+1\leq i\leq 2j+3$}\\
   \gamma_{2j+3}&\text{if $i=2j+4$}\\
                      \gamma_i&\text{if $i>2j+4$}
                     \end{cases}\\
                     N^2\gamma_i&\equiv\begin{cases}
          \gamma_i+(\gamma_1+\cdots+\gamma_{2j-1})+\gamma_{2j+3}&\text{if $i<2j$}\\
   \gamma_{2j+2}&\text{if $i=2j$}\\
                      \gamma_{2j}+\gamma_{2j+1}+\gamma_{2j+4}&\text{if $i=2j+1$}\\
   \gamma_{i-2}+(\gamma_1+\cdots+\gamma_{2j+1})+\gamma_{2j+4}&\text{if $i\in\{2j+2,2j+3\}$}\\
   (\gamma_1+\cdots+\gamma_{2j+1})&\text{if $i=2j+4$}\\
                      \gamma_i&\text{if $i>2j+4$}
                     \end{cases}
  \end{aligned}
\]
Hence
\[\begin{aligned}
   (N+N^2)\gamma_i&\equiv\begin{cases}
          \gamma_{2j+3}+\gamma_{2j+4}&\text{if $i<2j$}\\
          \gamma_{2j}+\gamma_{2j+1}&\text{if $i=2j$}\\
          (\gamma_1+\cdots+\gamma_{2j})+\gamma_{2j+2}+\gamma_{2j+4}&\text{if $i=2j+1$}\\
          \gamma_{2j}+\gamma_{2j+1}+\gamma_{2j+2}+\gamma_{2j+4}&\text{if $i=2j+2$}\\
          \gamma_{2j+1}+\gamma_{2j+4}&\text{if $i=2j+3$}\\
          (\gamma_1+\cdots+\gamma_{2j+1})+\gamma_{2j+3}&\text{if $i=2j+4$}\\
                      0&\text{if $i>2j+4$}
         \end{cases}\\
         (N+N^2)N^2\gamma_i&\equiv\begin{cases}
   \gamma_{2j+1}+\gamma_{2j+4}&\text{if $i<2j$}\\
   \gamma_{2j}+\gamma_{2j+1}+\gamma_{2j+2}+\gamma_{2j+4}&\text{if $i=2j$}\\
   \gamma_{2j}+\gamma_{2j+2}+\gamma_{2j+3}+\gamma_{2j+4}&\text{if $i=2j+1$}\\
   \gamma_{2j+1}+\gamma_{2j+2}&\text{if $i=2j+2$}\\
   (\gamma_1+\cdots+\gamma_{2j-1})+\gamma_{2j+4}&\text{if $i=2j+3$}\\   
   (\gamma_1+\cdots+\gamma_{2j+3})+\gamma_{2j}&\text{if $i=2j+4$}\\
               0&\text{if $i>2j+4$}.
              \end{cases}
  \end{aligned}
\]
Therefore $B=I_g+(N+N^2)+(N+N^2)N^2$ is equal to
\[\begin{aligned}
 B\gamma_i&\equiv\begin{cases}
                     \gamma_i+\gamma_{2j+1}+\gamma_{2j+3}&\text{if $i<2j$}\\
                     \gamma_{2j}+\gamma_{2j+2}+\gamma_{2j+4}&\text{if $i\in\{2j,2j+2,2j+4\}$}\\
                     (\gamma_1+\cdots+\gamma_{2j-1})+\gamma_{2j+1}+\gamma_{2j+3}&\text{if $i\in\{2j+1,2j+3\}$}\\
                     \gamma_i&\text{if $i>2j+4$,}
                    \end{cases}
\end{aligned}\]
and $B'=\psi (u_{2j+3}u_{2j+2}u_{2j+1}u_{2j}b_{j-1}^{-1})B$ is equal to
\[\begin{aligned}
 B'\gamma_i\equiv \begin{cases}
                     \gamma_i+(\gamma_1+\cdots+\gamma_{2j})+\gamma_{2j+2}+\gamma_{2j+4}&\text{if $i<2j$}\\
                     (\gamma_1+\cdots+\gamma_{2j-1})+\gamma_{2j+1}+\gamma_{2j+3}&\text{if $i\in\{2j,2j+2,2j+4\}$}\\
                     \gamma_{2j}+\gamma_{2j+2}+\gamma_{2j+4}&\text{if $i\in\{2j+1,2j+3\}$}\\
                     \gamma_i&\text{if $i>2j+4$.}  
                     \end{cases}
  \end{aligned}
\]
If $i>2j+4$ this implies that generators (G8) are trivial and if $i\leq 2j+4$ we get
\[\begin{aligned}
   r^{(A8)}_{j:i}\!:0&\equiv\begin{cases}
      b_{j+1,i}+b_{j-1,i}+b_{j-1,2j+1}+b_{j-1,2j+3}\\
      \quad -(b_{j,i}+b_{j,2j+2})-b_{j,2j+4}&\text{if $i<2j$}\\
      b_{j+1,i}+b_{j-1,2j}+b_{j-1,2j+2}+b_{j-1,2j+4}\\
      \quad -(b_{j,2j-1}+b_{j,2j+1})-b_{j,2j+3}&\text{if $i\in\{2j,2j+2,2j+4\}$}\\
      b_{j+1,i}+b_{j-1,2j-1}+b_{j-1,2j+1}+b_{j-1,2j+3}\\
      \quad -(b_{j,2j}+b_{j,2j+2})-b_{j,2j+4}&\text{if $i\in\{2j+1,2j+3\}$}      
     \end{cases}\\
     &\equiv\begin{cases}     
      b_{j+1,i}+b_{j-1,i}&\text{if $i<2j$}\\
      b_{j+1,i}+b_{j-1,2j}&\text{if $i\in\{2j,2j+2,2j+4\}$}\\
      b_{j+1,i}+b_{j-1,2j-1}&\text{if $i\in\{2j+1,2j+3\}$}
     \end{cases}\\
     &\equiv\begin{cases}     
      (b_{j+1,1}-a_{1,1}-\cdots-a_{2j+1,2j+1})\\
      \quad +(b_{j-1,1}-a_{1,1}-\cdots-a_{2j-3,2j-3})&\text{if $i=1$}\\
      (b_{j+1,i}\pm b_{j+1,1})\pm(b_{j+1,1}-a_{1,1}-\cdots-a_{2j+1,2j+1})\\
      \quad \pm (b_{j-1,1}-a_{1,1}-\cdots-a_{2j-3,2j-3})\pm(b_{j-1,2}+b_{j-1,1})&\text{if $1<i\leq 2j+4$.}
     \end{cases}
  \end{aligned}
\]
Hence we inductively proved that generators (G8)--(G10) are trivial, and generators (G11) are either trivial or they are in the cyclic group of order 2 generated by $b_{1,1}-a_{1,1}-a_{3,3}$.
\subsection*{(A9b)}
Relation (A9b) gives
\[\begin{aligned}
   r^{(A9b)}_{i}\!:0&=[b_{\frac{g-2}{2}}]\otimes\left(I_g-\psi(a_{g-5}^{-1})\right)\gamma_i+[a_{g-5}]\otimes\left(\psi\left(b_{\frac{g-2}{2}}^{-1}\right)-I_g\right)\gamma_i\\
   &\equiv \left[b_{\frac{g-2}{2}}\right]\otimes\left(I_g-\psi(u_{g-5})\right)\gamma_i\\   
   &\equiv\begin{cases}
      \left(b_{\frac{g-2}{2},g-4}+b_{\frac{g-2}{2},1}\right)-\left(b_{\frac{g-2}{2},g-5}-b_{\frac{g-2}{2},1}\right)&\text{if $i\in\{g-5,g-4\}$}\\
      0&\text{if $i\not\in\{g-5,g-4\}$.}
     \end{cases}
  \end{aligned}
\]
This relation gives no new information.

Note that at this point we proved that
\[\begin{aligned}&H_1({\Cal{M}}(N_{g,1});H_1(N_{g,1};\zz))
=\gen{u_{1,3},b_{1,1}-a_{1,1}-a_{3,3}}=
\zz_2\oplus\zz_2\quad\text{for $g\geq 8$ even,}\end{aligned}\]
which completes the proof of Theorem \ref{MainThm} for $N=N_{g,1}$.

Now we turn to the case of a closed surface $N=N_{g}$ where $g\geq 4$.
\subsection*{(B3)}
Relation (B3) has been carefully studied in Section 5.4 of \cite{PresHiperNon} (it is called (E3) there). If $g$ is even, it gives
\[r^{(B3)}_{i}\!:0=2a_{1,1}+2a_{3,3}+\cdots+2a_{g-1,g-1},\]
which means that generator (G12) is trivial, and if $g$ is odd, it gives
\[\begin{aligned}
   r^{(B3)}_{i}\!:0&=2a_{1,1}+2a_{3,3}+\cdots+2a_{g-2,g-2}-\ro_g\\
   &=-(2a_{g-1,g-1}+\ro_{g-1}+\ro_g)+(2a_{g-2,g-2}+\ro_{g-2}+\ro_{g-1})\\
   &\quad\cdots+(2a_{1,1}+\ro_1+\ro_2)+(2a_{2,2}+2a_{4,4}+\cdots+2a_{g-1,g-1})-\ro_1\\
   &\equiv (a_{1,1}+2a_{2,2}+2a_{4,4}+\cdots+2a_{g-1,g-1}-u_{1,1})-(a_{1,1}+\ro_1-u_{1,1}),
  \end{aligned}
\]
which means that generator (G12) is superfluous. 
\subsection*{(D1)}
Relation (D1) gives
\[\begin{aligned}
   r^{(D1)}_{j:i}\!:0&=([\ro]+\ro[a_j]-[a_j]-a_j[\ro])\otimes \gamma_i\\
   &=\begin{cases}
      -2a_{j,j}-\ro_j-\ro_{j+1}&\text{if $i=j$}\\
      (2a_{j,j}+\ro_j+\ro_{j+1})-2(a_{j,j}+a_{j,j+1})&\text{if $i=j+1$}\\
      -2a_{j,i}&\text{if $i\not\in\{j,j+1\}$.}
     \end{cases}
  \end{aligned}
\]
Hence generators (G6) are trivial.
\subsection*{(D2)}
Relation (D2) gives
\[\begin{aligned}
   r^{(D2)}_{j:i}\!:0&=([u_1]+u_1[\ro]+u_1\ro[u_1]-[\ro])\otimes\gamma_i\\
   &=\pm\begin{cases}
      (2a_{1,1}+\ro_1+\ro_2)-(u_{1,1}+u_{1,2})\\
      \quad -2(a_{1,1}+\ro_1-u_{1,1})&\text{if $i\in\{1,2\}$}\\
      0&\text{if $i>2$.}
     \end{cases}
  \end{aligned}
\]
Hence generator (G7) has order 2.
\subsection*{(E)}
Relation (E) gives no new information.
\[\begin{aligned}
   r^{(E)}_{j:i}\!:0&=([\ro]+\ro[\ro])\otimes \gamma_i=\ro_i-\ro_i=0.
  \end{aligned}
\]
\subsection*{(F)}
Before we deal with relation (F) let us try to establish some simplification rules, as in the case of relations not depending on  $g$. After rewriting relation (F) we will obtain a combination of generators (G1)--(G3) ((G3) only with $j=1$), (G6)--(G7) and (G12). Since generators (G2) and (G6) are trivial, we can assume that in the obtained relation there are no elements $a_{j,j+1}$ for $j=1,\ldots,g-1$ nor $\ro_j$ for $j\geq 2$. Using generator (G12) we can also remove all elements $a_{g-1,g-1}$, and since generators (G1) and (G3) generate cyclic groups, we can assume that in the obtained relation we have $a_{j,i}$ and $u_{j,i}$ only if $j=1$ and $i\in\{1,3\}$. As a result we will obtain a combination of $a_{1,3},u_{1,3}$ and generator (G7). All this elements have order 2, hence we can compute coefficients modulo 2. Moreover, the coefficient of generator (G7) is completely determined by coefficient of $\ro_1$, so we are not really interested in coefficients of $a_{1,1}$ and $u_{1,1}$.

For $n\in\{1,\ldots,g\}$ we define 
\[X_n=\psi(u_{n-1}\cdots u_2u_1\cdot u_1)=\begin{cases}
                                     \psi(u_1)&\text{if $n=1$}\\
                                     I_g&\text{if $n=2$}\\
                                     \psi(u_{n-1}\cdots u_2)&\text{if $n\in\{3,\ldots,g-1\}$}.
                                    \end{cases}
\] 
Relation (F) gives
\[\begin{aligned}
   r^{(F)}_{j:i}\!:0&=\left(\sum_{p=0}^{g-2}(u_1a_1a_2a_3\cdots a_{g-1}\ro)^{p}\right)[u_1]\otimes \gamma_i\\
   &\quad +\sum_{n=1}^{g-1}\left(\sum_{p=0}^{g-2}(u_1a_1a_2a_3\cdots a_{g-1}\ro)^{p}\right)(u_1a_1\cdots a_{n-1})[a_n]\otimes\gamma_i\\
   &\quad +\left(\sum_{p=1}^{g-1}(u_1a_1a_2a_3\cdots a_{g-1}\ro)^{p}\right)[\ro]\otimes\gamma_i\\
   &\equiv[u_1]\otimes \sum_{p=0}^{g-2}X_{g}^p\gamma_i+\sum_{n=1}^{g-1}[a_n]\otimes X_n\sum_{p=0}^{g-2}X_g^{p}\gamma_i+[\ro]\otimes \sum_{p=1}^{g-1}X_{g}^p\gamma_i.
  \end{aligned}
\]
Observe now that if $n\geq 3$ then $X_n$ acts on a basis $(\gamma_1,\ldots,\gamma_{g})$ as the cycle 
\[\gamma_n\mapsto \gamma_{n-1}\mapsto \ldots \mapsto \gamma_3\mapsto \gamma_2\mapsto \gamma_n\] 
of order $n-1$. In particular $X_g$ is the cycle 
\[\gamma_g\mapsto \gamma_{g-1}\mapsto \ldots \mapsto \gamma_3\mapsto \gamma_2\mapsto \gamma_g\] 
of order $g-1$, hence 
\[\sum_{p=0}^{g-2}X_g^{p}\gamma_i=\sum_{p=1}^{g-1}X_g^{p}\gamma_i=\begin{cases}
                                                                   (g-1)\gamma_1&\text{if $i=1$}\\
                                                                   \gamma_2+\cdots+\gamma_g&\text{if $i\in\{2,\ldots,g\}$.}
                                                                  \end{cases}
\]
Therefore for $i=1$ we have
\[\begin{aligned}
   r^{(F)}_{j:1}\!:0&=(g-1)\left(u_{1,1}+\sum_{n=1}^{g-1}[a_n]\otimes X_n\gamma_1+\ro_1\right)\\
   &=(g-1)(u_{1,1}+a_{1,1}+a_{2,1}+\cdots+a_{g-1,1}+\ro_1)\\
   &\equiv \begin{cases}
   0&\text{if $g$ odd}\\
            (a_{1,1}+\ro_1-u_{1,1})&\text{if $g$ even}.
           \end{cases}
  \end{aligned}
\]
For $i>1$ we have
\[\begin{aligned}
   r^{(F)}_{j:i}\!:0&=u_{1,2}+\cdots u_{1,g}+(a_{1,1}+a_{1,3}+\cdots+a_{1,g})+\\
   &\quad +\sum_{n=2}^{g-1}(a_{n,2}+\cdots+a_{n,g})+\ro_2+\cdots+\ro_g\\
   &\equiv\begin{cases}
      u_{1,3}+a_{1,3}+(a_{1,1}+2a_{2,2}+\cdots+2a_{g-1,g-1}-u_{1,1})&\text{if $g$ is odd}\\
      (a_{1,1}+\ro_1-u_{1,1})&\text{if $g$ is even.}
     \end{cases}
  \end{aligned}
\]
Hence in all cases generators (G7) and (G12) are superfluous.

This proves that
\[\begin{aligned}H_1({\Cal{M}}(N_{g});H_1(N_{g};\zz))&=\begin{cases}
                                      \gen{a_{1,3},u_{1,3},b_{1,1}-a_{1,1}-a_{3,3}}&\text{if $g\in\{4,5,6\}$}\\
                                      \gen{u_{1,3},b_{1,1}-a_{1,1}-a_{3,3}}&\text{if $g\geq 7$}
                                                           \end{cases}\\
                                                           &=\begin{cases}
                                      \zz_2\oplus\zz_2\oplus\zz_2&\text{if $g\in\{4,5,6\}$}\\
                                      \zz_2\oplus\zz_2&\text{if $g\geq 7$.}
                                                           \end{cases}\end{aligned}\]
which completes the proof of Theorem \ref{MainThm}. \nocite{GAP4}
\bibliographystyle{elsart-num-sort}

\begin{thebibliography}{10}
\expandafter\ifx\csname url\endcsname\relax
  \def\url#1{\texttt{#1}}\fi
\expandafter\ifx\csname urlprefix\endcsname\relax\def\urlprefix{URL }\fi
\expandafter\ifx\csname href\endcsname\relax
  \def\href#1#2{#2} \def\path#1{#1}\fi

\bibitem{Bir1}
J.~S. Birman.
\newblock Abelian quotiens of the mapping class group of a 2-manifold.
\newblock {\em Bull. Amer. Math. Soc.}, 76(1):147--150, 1970.

\bibitem{Bir1Err}
J.~S. Birman.
\newblock Errata: Abelian quotiens of the mapping class group of a 2-manifold.
\newblock {\em Bull. Amer. Math. Soc.}, 77:479, 1971.

\bibitem{GAP4}
The GAP~Group.
\newblock {\em {GAP -- Groups, Algorithms, and Programming, Version 4.4.12}},
  2008.

\bibitem{Harer-stab}
J.~L. Harer.
\newblock Stability of the homology of the mapping class groups of orientable
  surfaces.
\newblock {\em Ann. of Math.}, 121(2):215--249, 1985.

\bibitem{Harer3}
J.~L. Harer.
\newblock The third homology group of the moduli space of curves.
\newblock {\em Duke Math. J.}, 63(1):25--55, 1991.

\bibitem{Kork-non1}
M.~{K}orkmaz.
\newblock First homology group of mapping class groups of nonorientable
  surfaces.
\newblock {\em Math. Proc. Cambridge Philos. Soc.}, 123(3):487--499, 1998.

\bibitem{Kork-non}
M.~{K}orkmaz.
\newblock Mapping class groups of nonorientable surfaces.
\newblock {\em Geom. Dedicata}, 89:109--133, 2002.

\bibitem{MadWeiss}
I.~Madsen and M.~S. Weiss.
\newblock The stable moduli space of {R}iemann surfaces: {M}umford's
  conjecture.
\newblock {\em Ann. of Math.}, 165(3):843--941, 2007.

\bibitem{MoritaJacFou}
S.~Morita.
\newblock Families of jacobian manifolds and characteristic classes of surface
  boundles {I}.
\newblock {\em Ann. Inst. Fourier}, 39:777--810, 1989.

\bibitem{MumfordAbel}
D.~Mumford.
\newblock Abelian quotients of the {T}eichm{\"u}ller modular group.
\newblock {\em J. Analyse Math.}, 18:227--244, 1967.

\bibitem{SzepParis}
L.~Paris and B.~Szepietowski.
\newblock A presentation for the mapping class group of a nonorientable
  surface.
\newblock arXiv:1308.5856v1 [math.GT], 2013.

\bibitem{Powell}
J.~Powell.
\newblock Two theorems on the mapping class group of a surface.
\newblock {\em Proc. Amer. Math. Soc.}, 68(3):347--350, 1978.

\bibitem{Stukow_SurBg}
M.~Stukow.
\newblock Generating mapping class groups of nonorientable surfaces with
  boundary.
\newblock {\em Adv. Geom.}, 10(2):249--273, 2010.

\bibitem{StukowSimpSzepPar}
M.~{S}tukow.
\newblock A finite presentation for the mapping class group of a nonorientable
  surface with {D}ehn twists and one crosscap slide as generators.
\newblock {\em J. Pure Appl. Algebra}, 218(12):2226--2239, 2014.

\bibitem{PresHiperNon}
M.~{S}tukow.
\newblock A finite presentation for the hyperelliptic mapping class group of a
  non-orientable surface.
\newblock arXiv:1402.3905v2 [math.GT], 2014.

\bibitem{Tanaka}
A.~Tanaka.
\newblock The first homology group of the hyperelliptic mapping class group
  with twisted coefficients.
\newblock {\em Topology Appl.}, 115:19--42, 2001.

\bibitem{Wahl_stab}
N.~Wahl.
\newblock Homological stability for the mapping class groups of non-orientable
  surfaces.
\newblock {\em Invent. Math.}, 171:389--424, 2008.

\end{thebibliography}

\end{document}